\newtheorem{df}{Definition}[section]
\newtheorem{pr}[df]{Proposition}
\newtheorem{Th}[df]{Theorem}
\newtheorem{lm}[df]{Lemma}
\newtheorem{Cr}[df]{Corollary}
\newtheorem{as}[df]{Assumption}
\begin{document}

\title{\bf The Hasimoto Transformation for a 
Finite Length Vortex Filament and its Application}
\author{Masashi A{\sc iki}\({}^{\ast} \)}
\date{}
\maketitle
\vspace*{-0.5cm}

\begin{abstract}
We consider two nonlinear equations, the Localized Induction Equation 
and the cubic nonlinear Schr\"odinger Equation, and prove that the solvability 
of certain initial-boundary value problems for each equation is equivalent
through the generalized Hasimoto transformation. As an application, we prove the 
orbital stability of plane wave solutions of the nonlinear Schr\"odinger equation
based on stability estimates obtained for the 
Localized Induction Equation by the author in a paper in preparation.
As far as the author knows, this is the first time that the 
analysis of the Localized Induction Equation, along with the Hasimoto transformation, 
provided new insight for the nonlinear Schr\"odinger equation.
\end{abstract}

\vspace*{1cm}
\noindent
\({}^{\ast} \)Department of Mathematics\\
\ Faculty of Science and Technology, Tokyo University of Science\\
\ 2641 Yamazaki, Noda, Chiba 278-8510, Japan\\
\ E-mail: aiki\verb|_|masashi\verb|@|ma.noda.tus.ac.jp \\
\ ORCID: 0000-0002-3476-6894

\section{Introduction and Problem Setting}

A vortex filament is a space curve on which the vorticity of the fluid 
is concentrated. 
Vortex filaments are used to model very thin vortex structures such as vortices that 
trail off airplane wings or propellers. 
The model equation we consider in this paper is the 
Localized Induction Equation (LIE) given by
\begin{align*}
\mbox{\mathversion{bold}$x$}_{t}=\mbox{\mathversion{bold}$x$}_{s}\times
\mbox{\mathversion{bold}$x$}_{ss}
\end{align*}
where \( \mbox{\mathversion{bold}$x$}(s, t)= {}^{t}(x_{1}(s ,t), x_{2}(s ,t), x_{3}(s ,t))\) is the 
position vector of the vortex filament parametrized by 
its arc length \( s \) at time \( t\), 
\( \times \) is the exterior product in the three-dimensional Euclidean space,
and subscripts \( s\) and \(t \) are differentiations with the respective variables.

The LIE, which is derived by applying the localized induction approximation to the 
Biot--Savart integral, was first derived by 
Da Rios \cite{20} in 1906 and was re-derived twice independently by 
Murakami et al. \cite{22} in 1937 and by 
Arms and Hama \cite{21} in 1965. Since then, many researches have been done on 
the LIE and many results have been obtained.
Nishiyama and Tani \cite{5,13} proved the unique solvability of initial and 
initial-boundary value problems in Sobolev spaces. 
The author \cite{23} and the author and Iguchi \cite{11} proved the unique solvability 
of initial-boundary value problems in Sobolev spaces with different boundary 
conditions. 
In the above papers, the equation for the tangent vector,
\( \mbox{\mathversion{bold}$v$}:=\mbox{\mathversion{bold}$x$}_{s}\), given by
\begin{align}
\mbox{\mathversion{bold}$v$}_{t}=\mbox{\mathversion{bold}$v$}\times 
\mbox{\mathversion{bold}$v$}_{ss}
\label{vfe}
\end{align}
is introduced in the analysis and plays an important role.
Equation (\ref{vfe}) is sometimes called the Vortex Filament Equation (VFE).

Koiso \cite{12} considered a geometrically generalized setting in which he
rigorously proved the 
equivalence of the solvability of the initial value problem for the 
VFE and the cubic nonlinear Schr\"odinger equation. 
This equivalence was first shown by 
Hasimoto \cite{14} in which he studied the formation of solitons 
on a vortex filament. He defined a transformation of variable known as the 
Hasimoto transformation to transform the VFE into a
nonlinear Schr\"odinger equation. 
The Hasimoto transformation is a change of variable given by 
\[ q(s,t) = \kappa(s,t) \exp \left( {\rm i} \int ^{s}_{0} \tau(r,t) \,{\rm d}r
 \right), \]
where \( {\rm i}\) is the imaginary unit, 
\( \kappa \) is the curvature, and \( \tau \) is the torsion of the filament.
Defined as such, it is well known that \( q \) satisfies the 
nonlinear Schr\"odinger equation given by 
\begin{eqnarray}
{\rm i}q_{t}=q_{ss}+\frac{1}{2}\lvert q\rvert^{2}q.
\label{NLS}
\end{eqnarray}
The original transformation proposed by Hasimoto uses the torsion of the filament 
in its definition, which means that the transformation is undefined at points 
where the curvature of the filament is zero. 
Koiso \cite{12} constructed a transformation, sometimes referred to as the 
generalized Hasimoto transformation, and gave a mathematically rigorous 
proof of the equivalence of the VFE and (\ref{NLS}). 
More precisely, Koiso proved that the solvability of initial value problems 
for the VFE and (\ref{NLS}) are equivalent.
More recently, 
Banica and Vega \cite{15,16,25} and Guti\'errez, Rivas, 
and Vega \cite{17} utilized the generalized Hasimoto transformation 
to construct and analyze
a family of self-similar solutions of the LIE which forms a corner in finite time. 
In Chang, Shatah, and Uhlenbeck \cite{26} and 
Nahmod, Shatah, Vega, and Zeng \cite{24}, they considered the 
Schr\"odinger maps and employed a Hasimoto type transformation to 
prove the correspondence between the solution of the Schr\"odinger maps and the 
solution of a nonlinear Schr\"odinger type equation. The Schr\"odinger maps
are a generalization of the Heisenberg model for a ferromagnetic spin system given by
\begin{align*}
\mbox{\mathversion{bold}$m$}_{t}=
\mbox{\mathversion{bold}$m$}\times \Delta \mbox{\mathversion{bold}$m$},
\end{align*}
where \( \Delta \) is the Laplacian in \( \mathbf{R}^{n}\) and the 
unknown variable \( \mbox{\mathversion{bold}$m$}\) takes values in 
\( \mathbf{S}^{2}\). The Heisenberg model can be seen as a multi-dimensional 
version of the
VFE.

All of the results mentioned above which utilizes the Hasimoto transformation
consider either an infinitely long filament defined on the whole line or 
a closed filament defined on the torus.
As far as the author knows, the rigorous justification and application of the 
Hasimoto transformation for problems describing the 
motion of filaments with end-points have not been done.
In this paper, we justify the Hasimoto transformation and prove the equivalence of the 
solvability of the following initial-boundary value problems.
\begin{align}
\left\{
\begin{array}{ll}
\mbox{\mathversion{bold}$v$}_{t} =\mbox{\mathversion{bold}$v$}\times 
\mbox{\mathversion{bold}$v$}_{ss}, & s\in I_{L}, \ t>0, \\[3mm]
\mbox{\mathversion{bold}$v$}(s,0)=\mbox{\mathversion{bold}$v$}_{0}
, & s\in I_{L}, \ t>0, \\[3mm]
\mbox{\mathversion{bold}$v$}(0,t)=\mbox{\mathversion{bold}$e$}_{1}, \ 
\mbox{\mathversion{bold}$v$}(L,t)=\mbox{\mathversion{bold}$b$}, & t>0,
\end{array}\right.
\label{slant}
\end{align}
\begin{align}
\left\{
\begin{array}{ll}
\displaystyle
{\rm i}q_{t}=q_{ss}+\frac{1}{2}\lvert q\rvert^{2}q, & s\in I_{L},t>0, \\[3mm]
q(s,0)=q_{0}(s), & s\in I_{L}, \\[3mm]
q_{s}(0,t)=q_{s}(L,t)=0, & t>0. 
\end{array}\right.
\label{plane}
\end{align}
Problem (\ref{slant}) is an initial-boundary value problem for the VFE
which describes the 
motion of a vortex filament on a slanted plane, considered in a previous paper by the 
author \cite{23}. Here, \( L>0\) is the length of the initial filament, 
\( I_{L}\subset \mathbf{R}\) is the interval \( (0,L)\), 
\( \mbox{\mathversion{bold}$e$}_{1}={}^{t}(1,0,0)\), and
\( \mbox{\mathversion{bold}$b$}\in \mathbf{R}^{3}\)
is an arbitrary constant vector with unit length. The boundary datum at 
\( s=0\) was chosen as \( \mbox{\mathversion{bold}$e$}_{1}\) without loss of 
generality, because the VFE is invariant under rotation.
Problem (\ref{plane}) is an initial-boundary value problem for the focusing cubic 
nonlinear Schr\"odinger equation, where \( q=q(s,t)\) is a complex-valued function.

As an application of this equivalence, we will prove the orbital stability 
of the plane wave solution
\( q_{R}\)
of (\ref{plane}) given in the form
\begin{align*}
q_{R}(t)=-\frac{1}{R}\exp\left\{ -\frac{{\rm i}t}{2R^{2}} \right\}
\end{align*}
for \( R> L/\pi\) in the Sobolev space \( H^{2}(I_{L})\). This will be done by 
considering problem (\ref{slant}) with appropriate 
initial and boundary data, utilizing energy estimates for the solution
\(\mbox{\mathversion{bold}$v$}\) of 
(\ref{slant}) obtained by the author in \cite{23,27}, and transferring  
the estimates for \( \mbox{\mathversion{bold}$v$}\) into estimates for 
solutions of (\ref{plane}).

As far as the author knows, the results of this paper is the first time
the generalized Hasimoto transformation is utilized to give new insight on 
the nonlinear Schr\"odinger equation from known facts about the
VFE. All of the preceding works utilizing the generalized 
Hasimoto transformation did so to analyze the solution of the VFE
utilizing known facts about the nonlinear Schr\"odinger equation.

The initial value problem for equation (\ref{NLS}) on the torus, 
explicitly given by
\begin{align}
\left\{
\begin{array}{ll}
\displaystyle
{\rm i}u_{t}=u_{ss}+\frac{1}{2}\lvert u\rvert^{2}u, & s\in \mathbf{T},t>0, \\[3mm]
u(s,0)=u_{0}(s), & s\in \mathbf{T},
\end{array}\right.
\label{pNLS}
\end{align}
where \( \mathbf{T}=\mathbf{R}/[-L,L]\), is closely related to problem
(\ref{plane}) since the solvability of problem (\ref{plane}) can be reduced to
the solvability of problem (\ref{pNLS}) by reflection and periodic extension.

It is known by Zakharov and Shabat \cite{40} that 
equation (\ref{NLS}) is completely integrable, and
the solution to problem 
(\ref{pNLS}) posses infinitely many conserved quantities.
This in turn implies that solutions in the Sobolev space 
\( H^{m}\) for \( m\in \mathbf{N}\) 
is bounded in \( H^{m}\) for all time. The solvability of problem (\ref{pNLS}) in 
Lebesgue or Sobolev spaces are known, for example, by
Bourgain \cite{41}. 
Hence, it is natural to ask if particular types of solutions are stable 
in Sobolev spaces.
Namely, the stability of plane wave solutions of equation (\ref{NLS}) has 
been studied by many researchers. 

Zhidkov \cite{28} gives a detailed analysis of the plane wave solutions for 
the initial value problem on the whole space \( \mathbf{R}\).
The stability of plane wave solutions and periodic wave solutions for 
problem (\ref{pNLS}) was investigated by
Rowlands \cite{44}, 
Gallay and H\u{a}r\u{a}gu\c{s} \cite{29,30},
Faou, Gauckler, and Lubich \cite{31}, and 
Wilson \cite{42}.

In particular, Gallay and H\u{a}r\u{a}gu\c{s} \cite{29,30} considered 
problem (\ref{pNLS}) and
proved the orbital stability of periodic wave solutions in \( H^{1}_{per}\),
i.e., the orbital stability in \( H^{1}\) with the perturbation restricted to 
periodic perturbations with the same period as the periodic wave solution.
By definition, periodic wave solutions include plane wave solutions and hence,
the stability results in \cite{29,30} are valid for plane wave solutions as well.

Faou, Gauckler, and Lubich \cite{31} considered the
initial value problem on the torus with general dimension \( d \geq 1 \),
and proved the long-time orbital stability of plane wave solutions
in \( H^{r} \). By long-time they mean 
stability up to time of order \( O(\frac{1}{\varepsilon ^N}) \) where
\( \varepsilon >0 \) is the size of the perturbation in \( H^{r}\) and 
\( N\in \mathbf{N}\).
The index \( r>0 \) must be chosen sufficiently large, depending on \( N\) and 
the \( L^{2}\) norm of the perturbation.

Wilson \cite{42} considered the stability problem of plane wave solutions
for the nonlinear Schr\"odinger equation with general power nonlinearities,
and obtained similar results as \cite{31}.

Other results on the 
initial--boundary value problems for the nonlinear Schr\"odinger equation have been
obtained by Holmer \cite{51}, 
Fokas and Its \cite{52}, Fokas, Its, and Sung \cite{54},
Lenells and Fokas \cite{55,56}, 
Bona, Sun, and Zhang \cite{57}, 
and
Fokas, Himonas, and Mantzavinos \cite{53}.
These results prove the well-posedness of 
initial-boundary value problems under various boundary conditions as well as
obtain representation formulas for boundary values which represent 
unknown boundary values of a solution by known boundary values, but 
do not address the problem of stability of specific solutions.

In summary, we see that up until this paper, the stability of 
plane wave solutions in higher order Sobolev spaces is 
only partially known. Specifically, the regularity for which 
the stability is proved in \cite{31,42} is not given explicitly, and
the time-span for which the stability holds is not global.
On the other hand, the results in this paper give the time-global orbital stability 
of plane wave solutions in Sobolev space \( H^2 \).
This is possible greatly due to the fact that the method of the proof 
given in this paper is vastly different from the preceding works.

The methods utilized to prove the stability of solutions in 
\cite{28,44,29,30,31,42} can be broadly categorized into two types.
Variational methods and methods utilizing the Hamiltonian structure of the
equation.
The variational approach was utilized in 
Cazenave and Lions \cite{62} to prove orbital stability of 
standing waves for the nonlinear Schr\"odinger equation.
The approach utilizing the Hamiltonian structure of the equation was
introduced by Grillakis, Shatah, and Strauss \cite{60,61} for a broad range of 
equations having a Hamiltonian structure.
These methods are widely adopted to approach stability problems for a 
wide variety of dispersive equations.
In contrast, the method employed in this paper is tailor-made specifically
for problem (\ref{slant}) and (\ref{plane}). Hence, 
we are able to obtain more information for our
specific problem, but the method is not as widely applicable to other problems
compared to traditional methods.

The contents of the rest of the paper are as follows.
In Section 2, we introduce basic notations and define 
compatibility conditions related to problems (\ref{slant}) and 
(\ref{plane}). Then, we give a brief explanation of the 
Hasimoto transformation and state our main theorems.
In Section 3, we prove our main theorems. 
We first prove that the solvability of problems
(\ref{slant}) and (\ref{plane}) is equivalent
through the generalized Hasimoto transformation.
We further prove that plane wave solutions of problem 
(\ref{plane}) correspond to a particular type of 
solution, which we call arc-shaped solutions, of
problem (\ref{slant}).

Then, we prove stability estimates for arc-shaped solutions 
of problem (\ref{slant}) and also prove that 
these stability estimates can be transferred to 
stability estimates for plane wave solutions of problem (\ref{plane})
through the generalized Hasimoto transformation.
The stability estimates for arc-shaped solutions are essentially derived 
from standard energy estimates for the perturbation, 
which is much more simple than traditional methods.


\section{Function Spaces, Notations, and Main Theorem}
\setcounter{equation}{0}

We introduce some function spaces that will be used throughout this paper, 
and notations associated with the spaces.
For a non-negative integer \( m\) and \( 1\leq p \leq \infty \), \( W^{m,p}(I_{L})\) 
is the Sobolev space 
containing all real-valued functions that have derivatives in the sense of
 distribution up to order \( m\) 
belonging to \( L^{p}(I_{L})\).
We set \( H^{m}(I_{L}) := W^{m,2}(I_{L}) \) as the 
Sobolev space equipped with the usual inner product, and 
set \( H^{1}_{0}(I_{L}) \) as the closure, with 
respect to the \( H^{1}\)-norm, of the set of smooth functions with compact support.
The norm in \( H^{m}(I_{L}) \) is denoted by \( \| \cdot \|_{m} \) and we 
simply write \( \| \cdot \| \) 
for \( \|\cdot \|_{0} \). Otherwise, for a Banach space \( X\), 
the norm in \( X\) is written as \( \| \cdot \| _{X}\).
The inner product in \( L^{2}(I_{L})\) is denoted by \( (\cdot ,\cdot )\).

For \( 0<T \leq \infty \) and a Banach space \( X\), 
\( C^{m}([0,T];X) \)
( \( C^{m}\big( [0,\infty);X\big)\) when \( T= \infty \)),
denotes the space of functions that are \( m\) times continuously differentiable 
in \( t\) with respect to the norm of \( X\).
The space \( L^{\infty}\big( 0,\infty; X \big) \) denotes the space of functions 
that are essentially bounded in \( t\) with respect to the norm of \( X\)

For any function space described above, we say that a vector valued function 
belongs to the function space 
if each of its components does, and the same for complex-valued functions if 
both the real and imaginary parts do.

Finally, vectors \( \mbox{\mathversion{bold}$e$}_{j}\in \mathbf{R}^{3}\) 
for \( j=1,2,3\) denote the standard basis of \( \mathbf{R}^{3}\).
In other words, \( \mbox{\mathversion{bold}$e$}_{1}={}^{t}(1,0,0)\),
\( \mbox{\mathversion{bold}$e$}_{2}={}^{t}(0,1,0)\), and
\( \mbox{\mathversion{bold}$e$}_{3}={}^{t}(0,0,1) \).
Additionally, we denote the unit sphere in \( \mathbf{R}^{3}\) by
\( \mathbf{S}^{2}\).

\bigskip
We next introduce some definitions in order to state the main theorems of this paper.
First we define the compatibility conditions for both 
(\ref{slant}) and (\ref{plane}).
%
%
%

%
%
%
%
\subsection{Compatibility Conditions for (\ref{slant}) and (\ref{plane})}
First we define the compatibility conditions needed in this paper for
problem (\ref{slant}).
\begin{df}
For \( m=0 \ \text{or} \ 1\), 
we say that 
\( \mbox{\mathversion{bold}$v$}_{0}\in H^{2m+1}(I_{L}) \) and 
\( \mbox{\mathversion{bold}$b$}\in \mathbf{S}^{2}\) satisfy the \( m\)-th order compatibility condition
for {\rm (\ref{slant})} if 
\begin{align*}
\mbox{\mathversion{bold}$v$}_{0}(0)=\mbox{\mathversion{bold}$e$}_{1}, \quad 
\mbox{\mathversion{bold}$v$}_{0}(L)=\mbox{\mathversion{bold}$b$},
\end{align*}
when \( m=0\), and 
\begin{align*}
\mbox{\mathversion{bold}$v$}_{0}(0)\times \mbox{\mathversion{bold}$v$}_{0ss}(0)
=
\mbox{\mathversion{bold}$v$}_{0}(L)\times \mbox{\mathversion{bold}$v$}_{0ss}(L)
=
\mbox{\mathversion{bold}$0$}
\end{align*}
when \( m=1 \).
We also say that 
\( \mbox{\mathversion{bold}$v$}_{0}\) and 
\( \mbox{\mathversion{bold}$b$} \) satisfy the 
compatibility conditions for {\rm (\ref{slant})} up to order \( 1\) if
\( \mbox{\mathversion{bold}$v$}_{0}\) and 
\( \mbox{\mathversion{bold}$b$}\) satisfy both the \( 0\)-th order and the 
\( 1\)-st order compatibility condition for {\rm (\ref{slant})}.
\end{df}
Next we define the corresponding compatibility condition for 
(\ref{plane}).
\begin{df}
For \( q_{0}\in H^{2}(I_{L})\), we say that \( q_{0}\) satisfies the 
\( 0\)-th order compatibility condition for {\rm (\ref{plane})} if
\begin{align*}
q_{0s}(0)=q_{0s}(L)=0
\end{align*}
are satisfied.
\end{df}
%
%
%
%
%
%
\subsection{The Hasimoto Transformation and the Main Theorems}
To state our main theorem, we give a brief explanation of the Hasimoto transformation.
The Hasimoto transformation is a map that relates the solution of 
equation (\ref{vfe}) to the solution of equation (\ref{NLS}) proposed by
Hasimoto \cite{14}. The original transformation proposed by Hasimoto 
isn't always well-defined because the transformation is defined using the 
torsion of the filament, which is not defined at points 
where the curvature of the filament is zero. Later, Koiso \cite{12} 
proved that the solvability of the initial value problem on the torus for 
(\ref{vfe}) and (\ref{NLS}) is equivalent. Koiso did so by constructing a modified
transformation, to which we refer to as the generalized Hasimoto transformation, 
which maps solutions of equation (\ref{vfe}) to solutions of equation (\ref{NLS}). 
This transformation is invertible, and hence, the solvability is equivalent.

One of the aims of this paper is to prove that the generalized 
Hasimoto transformation given by Koiso \cite{12}
can be further modified to prove the equivalence of the solvability of 
problem (\ref{slant}) and problem (\ref{plane}).
More precisely, we prove the following.
\begin{Th}
For \( q_{0}\in H^{2}(I_{L}) \) satisfying the \( 0\)-th order 
compatibility condition for
{\rm (\ref{plane})},
there exists \( \mbox{\mathversion{bold}$v$}_{0}\in H^{3}(I_{L})\) and 
\( \mbox{\mathversion{bold}$b$}\in \mathbf{S}^{2}\)
satisfying \( \lvert \mbox{\mathversion{bold}$v$}_{0}\rvert \equiv 1\)
and the compatibility conditions for {\rm (\ref{slant})}
up to order \( 1\)
such that the following holds. The solution 
\( \mbox{\mathversion{bold}$v$}\in C\big( [0,\infty);H^{3}(I_{L})\big) \cap C^{1}\big( [0,\infty);H^{1}(I_{L})\big) \) of problem 
{\rm (\ref{slant})} with initial datum
\( \mbox{\mathversion{bold}$v$}_{0}\) and boundary datum 
\( \mbox{\mathversion{bold}$b$}\)
corresponds to the 
solution \( q\in C\big( [0,\infty);H^{2}(I_{L})\big) \cap C^{1}\big( [0,\infty);L^{2}(I_{L})\big) \) of problem {\rm (\ref{plane})}
 with initial datum \( q_{0}\) through the 
generalized Hasimoto transformation.

\label{hasi1}
\end{Th}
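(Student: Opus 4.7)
The plan is to invert the generalized Hasimoto transformation at time zero to produce $(\mathbf{v}_0, \mathbf{b})$ from the given $q_0$, to invoke the solvability theorem for (\ref{slant}) from \cite{23} to obtain the filament $\mathbf{v}$, and finally to apply the forward transformation along $\mathbf{v}$ to produce a function $\tilde{q}$ which, by construction, solves (\ref{plane}) with initial datum $q_0$; uniqueness for (\ref{plane}), inherited from well-posedness for (\ref{pNLS}) after reflection and periodic extension, then identifies this $\tilde{q}$ with the $q$ in the statement.

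For the first step, I would construct $\mathbf{v}_0$ together with a parallel orthonormal frame $(\mathbf{v}_0, \mathbf{e}_0, \mathbf{f}_0)$ by solving in $s \in I_L$ the linear ODE system
\[ \mathbf{v}_{0s} = \mathrm{Re}(q_0)\,\mathbf{e}_0 + \mathrm{Im}(q_0)\,\mathbf{f}_0, \quad \mathbf{e}_{0s} = -\mathrm{Re}(q_0)\,\mathbf{v}_0, \quad \mathbf{f}_{0s} = -\mathrm{Im}(q_0)\,\mathbf{v}_0, \]
subject to $\mathbf{v}_0(0) = \mathbf{e}_1$ with $(\mathbf{e}_0(0), \mathbf{f}_0(0))$ any fixed orthonormal basis of $\mathbf{e}_1^{\perp}$, and then set $\mathbf{b} := \mathbf{v}_0(L)$. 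Antisymmetry of the coefficient matrix preserves orthonormality of the frame, giving $|\mathbf{v}_0| \equiv 1$ and $\mathbf{b} \in \mathbf{S}^2$ for free. The embedding $H^2(I_L) \hookrightarrow L^{\infty}(I_L)$ makes the system well-posed, and a bootstrap using the ODE upgrades the regularity to $\mathbf{v}_0 \in H^3(I_L)$. The 0-th order compatibility for (\ref{slant}) is built in, while one differentiation of the first equation gives
\[ \mathbf{v}_{0ss} = \mathrm{Re}(q_{0s})\,\mathbf{e}_0 + \mathrm{Im}(q_{0s})\,\mathbf{f}_0 - |q_0|^2\,\mathbf{v}_0, \]
so that $\mathbf{v}_0 \times \mathbf{v}_{0ss}$ at each endpoint is a linear combination of $\mathbf{v}_0 \times \mathbf{e}_0$ and $\mathbf{v}_0 \times \mathbf{f}_0$ with coefficients $\mathrm{Re}(q_{0s})$ and $\mathrm{Im}(q_{0s})$; the hypothesis $q_{0s}(0) = q_{0s}(L) = 0$ therefore delivers the 1-st order compatibility for (\ref{slant}).

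With $\mathbf{v}$ in hand, I would extend the initial frame to all $t > 0$ by propagating $(\mathbf{e}, \mathbf{f})$ in time via the connection coefficients dictated by the compatibility of the $s$- and $t$-derivatives, following Koiso's construction in \cite{12} verbatim in the interior. Defining $\tilde{q}(s,t) := \mathbf{v}_s \cdot \mathbf{e} + \mathrm{i}\, \mathbf{v}_s \cdot \mathbf{f}$ and repeating Koiso's computation shows that $\tilde{q}$ satisfies (\ref{NLS}) up to a space-independent phase that can be absorbed by a gauge redefinition of the frame at $s = 0$. The main obstacle is to establish the Neumann-type boundary condition $\tilde{q}_s(0,t) = \tilde{q}_s(L,t) = 0$, which is not imposed a priori but must be derived from the filament boundary data. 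I expect this to follow from the observation that $\mathbf{v}(0,t) = \mathbf{e}_1$ and $\mathbf{v}(L,t) = \mathbf{b}$ are time-independent, so the VFE forces $\mathbf{v} \times \mathbf{v}_{ss} = \mathbf{0}$ at both endpoints, i.e., $\mathbf{v}_{ss}$ is parallel to $\mathbf{v}$ there; expanding $\mathbf{v}_{ss}$ in the frame exactly as in the initial-time computation above reduces this parallelism to precisely $\tilde{q}_s = 0$ at $s = 0$ and $s = L$. Once this is in place, $\tilde{q}$ is a bona fide solution of (\ref{plane}) with initial datum $q_0$, which is the desired correspondence.
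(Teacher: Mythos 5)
Your proposal is correct and follows essentially the same route as the paper's proof: the identical frame ODE construction at \( t=0 \) (the paper fixes \( (\mathbf{e}^{0},\mathbf{w}^{0})(0)=(-\mathbf{e}_{2},\mathbf{e}_{3}) \) where you allow any orthonormal basis of \( \mathbf{e}_{1}^{\perp} \)), the same computation of \( \mathbf{v}_{0}\times \mathbf{v}_{0ss} \) to get the first-order compatibility condition from \( q_{0s}(0)=q_{0s}(L)=0 \), the same Koiso-style decomposition \( \mathbf{v}_{s}=\psi_{1}\mathbf{e}+\psi_{2}\mathbf{w} \) with the spatially constant real potential removed by the gauge factor \( \exp\bigl\{ \tfrac{\mathrm{i}}{2}\int_{0}^{t}\lvert\psi(0,\tau)\rvert^{2}\,d\tau \bigr\} \), and the same endpoint argument (the paper uses \( \mathbf{v}_{t}=\mathbf{0} \) at \( s=0,L \), which via the VFE is exactly your condition \( \mathbf{v}\times\mathbf{v}_{ss}=\mathbf{0} \)) to obtain \( q_{s}(0,t)=q_{s}(L,t)=0 \). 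The only substantive items the paper includes that you omit are routine verifications: the regularity \( \psi_{1},\psi_{2}\in C\bigl([0,\infty);H^{2}(I_{L})\bigr)\cap C^{1}\bigl([0,\infty);L^{2}(I_{L})\bigr) \) justifying the formal computation of \( \psi_{t} \), and the quantitative estimate \( \| q(t)\|_{2}\leq C\bigl(\|\mathbf{v}(t)\|_{3}+\|\mathbf{v}(t)\|_{3}^{3}\bigr) \), which is recorded for later use in the stability proof.
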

\begin{Th}
For \( \mbox{\mathversion{bold}$v$}_{0}\in H^{3}(I_{L})\) and 
\( \mbox{\mathversion{bold}$b$}\in \mathbf{S}^{2} \)
satisfying \( \lvert \mbox{\mathversion{bold}$v$}_{0}\rvert \equiv 1 \)
and the compatibility conditions for {\rm (\ref{slant})}
up to order \( 1\),
there exists \( q_{0}\in H^{2}(I_{L}) \) satisfying the \( 0\)-th order 
compatibility condition for {\rm (\ref{plane})}
such that the following holds. The solution 
\( q\in C\big( [0,\infty);H^{2}(I_{L})\big) \cap C^{1}\big( [0,\infty);L^{2}(I_{L})\big) \) of problem 
{\rm (\ref{plane})} with initial datum \( q_{0}\)
corresponds to the solution
\( \mbox{\mathversion{bold}$v$}\in C\big( [0,\infty);H^{3}(I_{L})\big) \cap C^{1}\big( [0,\infty);H^{1}(I_{L})\big) \) of problem 
{\rm (\ref{slant})} with initial datum
\( \mbox{\mathversion{bold}$v$}_{0}\) and boundary datum 
\( \mbox{\mathversion{bold}$b$}\)
through the inverse generalized Hasimoto transformation.
\label{hasi2}
\end{Th}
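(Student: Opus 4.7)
My plan is to reverse the construction used in Theorem \ref{hasi1}. I would first invoke the author's earlier global solvability result \cite{23} for problem (\ref{slant}) to produce the solution \( \mbox{\mathversion{bold}$v$} \) in the stated regularity class, with \( \lvert \mbox{\mathversion{bold}$v$}\rvert \equiv 1 \) preserved because \( \partial_{t}\lvert \mbox{\mathversion{bold}$v$}\rvert^{2} = 2\mbox{\mathversion{bold}$v$}\cdot(\mbox{\mathversion{bold}$v$}\times \mbox{\mathversion{bold}$v$}_{ss}) = 0 \). Along this filament I would build a parallel orthonormal frame \( (\mbox{\mathversion{bold}$v$}, \mbox{\mathversion{bold}$N$}_{1}, \mbox{\mathversion{bold}$N$}_{2}) \) by fixing the boundary frame \( \mbox{\mathversion{bold}$N$}_{1}(0,t) = \mbox{\mathversion{bold}$e$}_{2} \) and \( \mbox{\mathversion{bold}$N$}_{2}(0,t) = \mbox{\mathversion{bold}$e$}_{3} \) (consistent with \( \mbox{\mathversion{bold}$v$}(0,t) = \mbox{\mathversion{bold}$e$}_{1} \)), and for each \( t\geq 0 \) extending along \( s \) by the parallel transport ODE \( \mbox{\mathversion{bold}$N$}_{js} = -(\mbox{\mathversion{bold}$v$}_{s}\cdot \mbox{\mathversion{bold}$N$}_{j})\mbox{\mathversion{bold}$v$} \) for \( j=1,2 \). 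Setting \( k_{j} := \mbox{\mathversion{bold}$v$}_{s}\cdot \mbox{\mathversion{bold}$N$}_{j} \), I would define \( q := k_{1} - \mathrm{i}\, k_{2} \), the candidate image under the generalized Hasimoto transformation.

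Next I would derive the PDE satisfied by \( q \). Writing \( \mbox{\mathversion{bold}$v$}_{ss} = k_{1s}\mbox{\mathversion{bold}$N$}_{1} + k_{2s}\mbox{\mathversion{bold}$N$}_{2} - (k_{1}^{2}+k_{2}^{2})\mbox{\mathversion{bold}$v$} \), the VFE gives \( \mbox{\mathversion{bold}$v$}_{t} = -k_{2s}\mbox{\mathversion{bold}$N$}_{1} + k_{1s}\mbox{\mathversion{bold}$N$}_{2} \), and orthonormality of the frame forces its time evolution to take the gauge form
\[
\mbox{\mathversion{bold}$N$}_{1t} = k_{2s}\mbox{\mathversion{bold}$v$} + \beta \mbox{\mathversion{bold}$N$}_{2},\qquad \mbox{\mathversion{bold}$N$}_{2t} = -k_{1s}\mbox{\mathversion{bold}$v$} - \beta \mbox{\mathversion{bold}$N$}_{1}
\]
for a single scalar \( \beta(s,t) \). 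Imposing the commutation relation \( \partial_{s}\mbox{\mathversion{bold}$N$}_{jt} = \partial_{t}\mbox{\mathversion{bold}$N$}_{js} \) yields \( \beta_{s} = -\tfrac{1}{2}\partial_{s}\lvert q\rvert^{2} \), and the constancy of the boundary frame at \( s=0 \) forces \( \beta(0,t) = 0 \); hence \( \beta = \tfrac{1}{2}(\lvert q(0,t)\rvert^{2} - \lvert q\rvert^{2}) \). Computing \( q_{t} = k_{1t} - \mathrm{i}\,k_{2t} \) with these evolution equations then leads, after routine algebra, to an identity of the form \( \mathrm{i}q_{t} = q_{ss} + \tfrac{1}{2}\lvert q\rvert^{2}q - A(t) q \) with \( A(t) := \tfrac{1}{2}\lvert q(0,t)\rvert^{2} \). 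Defining \( \tilde q(s,t) := q(s,t)\exp\bigl(-\mathrm{i}\int_{0}^{t}A(\tau)\,\mathrm{d}\tau\bigr) \) absorbs the purely time-dependent correction, so that \( \tilde q \) satisfies exactly (\ref{NLS}).

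To finish, I would verify the boundary conditions and regularity. The first-order compatibility condition \( \mbox{\mathversion{bold}$v$}\times \mbox{\mathversion{bold}$v$}_{ss} = \mbox{\mathversion{bold}$0$} \) at \( s=0, L \) forces \( \mbox{\mathversion{bold}$v$}_{ss}\parallel \mbox{\mathversion{bold}$v$} \) there, so \( k_{js}(0,t) = k_{js}(L,t) = 0 \), and hence \( \tilde q_{s}(0,t) = \tilde q_{s}(L,t) = 0 \) since the phase factor depends only on \( t \). The \( H^{3} \) spatial regularity of \( \mbox{\mathversion{bold}$v$} \) propagates through the parallel transport ODE to \( \mbox{\mathversion{bold}$N$}_{j} \) and gives \( \tilde q \in C([0,\infty); H^{2}(I_{L})) \), while \( \tilde q_{t}\in C([0,\infty); L^{2}(I_{L})) \) follows from the equation itself. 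Setting \( q_{0} := \tilde q(\cdot, 0) \) produces an initial datum in \( H^{2}(I_{L}) \) satisfying the \( 0\)-th order compatibility condition for (\ref{plane}), and by global solvability of (\ref{plane}) in this class---obtained via reflection and periodic extension from the torus problem (\ref{pNLS}), cf.\ \cite{41}---\( \tilde q \) is the unique solution of (\ref{plane}) with initial datum \( q_{0} \).

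The main obstacle I anticipate is the algebraic verification that the \( s \)-dependent part of the gauge \( \beta \) combines precisely with the term arising from \( \mbox{\mathversion{bold}$v$}_{ss} \) so that only a purely time-dependent remainder \( A(t) q \) survives, since any spatially non-trivial residue would obstruct the reduction to (\ref{NLS}). A secondary, more routine but still delicate, issue is tracking the Sobolev regularity of the parallel frame through the ODE in \( s \) and through its time evolution, so that the final \( \tilde q \) lies in exactly the stated class \( C([0,\infty); H^{2}) \cap C^{1}([0,\infty); L^{2}) \) and that the Neumann compatibility for \( q_{0} \) is preserved at \( t=0 \).
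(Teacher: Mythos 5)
Your proposal is correct in substance, but it proves the theorem by the mirror image of the paper's route. You solve problem (\ref{slant}) first and push its solution forward to a solution of (\ref{plane}) via the parallel frame, the zero-curvature relations in \( t \), and the gauge transformation removing the boundary trace \( \tfrac{1}{2}\lvert q(0,t)\rvert^{2} \) --- this is precisely the machinery of the paper's proof of Theorem \ref{hasi1} --- and you then identify the resulting \( \tilde q \) with \emph{the} solution of (\ref{plane}) by uniqueness (Theorem \ref{existq}). The paper's proof of Theorem \ref{hasi2} goes the opposite way: it builds \( q_{0} \) from \( \mbox{\mathversion{bold}$v$}_{0} \) by the frame ODE at \( t=0 \), solves (\ref{plane}) to obtain \( q \), and then constructs \( \mbox{\mathversion{bold}$v$} \) \emph{from} \( q \) by solving the frame system (\ref{vss}) in \( s \) at each time \( t \), verifying directly through the evolution formulas (\ref{vtt}) that this \( \mbox{\mathversion{bold}$v$} \) satisfies the VFE, the initial condition (by ODE uniqueness at \( t=0 \)), and the boundary conditions (from \( q_{s}(0,t)=q_{s}(L,t)=0 \)). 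The trade-off is symmetric: your route leans on uniqueness for (\ref{plane}), while the paper's leans on uniqueness for (\ref{slant}) to identify the constructed filament with the given solution; what the paper's route buys is an explicit realization of the inverse map \( q\mapsto \mbox{\mathversion{bold}$v$} \) as a concrete ODE construction, which is what ``corresponds through the inverse generalized Hasimoto transformation'' refers to, whereas your route only exhibits \( \mbox{\mathversion{bold}$v$} \) as the preimage of \( q \) under the forward map. The gauge obstacle you flag at the end is exactly the point where these differ: if one feeds the clean-NLS solution \( \tilde q \) directly into the frame ODEs with time-independent frame data at \( s=0 \), the gauge phase reappears as a time-dependent rotation of the filament about \( \mbox{\mathversion{bold}$e$}_{1} \), so the concrete inverse construction must first undo the phase, which is possible intrinsically since \( \lvert \tilde q(0,t)\rvert = \lvert q(0,t)\rvert \) is gauge-invariant; your forward-plus-uniqueness argument sidesteps this entirely.

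Two minor points to tighten. First, your derivation of \( k_{js}(0,t)=k_{js}(L,t)=0 \) for all \( t \) from ``the first-order compatibility condition at \( s=0,L \)'' is loose: the compatibility condition concerns only the initial data. For \( t>0 \) one should differentiate the time-independent boundary conditions to get \( \mbox{\mathversion{bold}$v$}_{t}(0,t)=\mbox{\mathversion{bold}$v$}_{t}(L,t)=\mbox{\mathversion{bold}$0$} \) and then invoke the equation to conclude \( \mbox{\mathversion{bold}$v$}\times\mbox{\mathversion{bold}$v$}_{ss}=\mbox{\mathversion{bold}$0$} \) at the endpoints, exactly as the paper does in Section \ref{th1}. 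Second, your sign conventions (right-handed frame with \( q=k_{1}-{\rm i}k_{2} \) and the gauge factor \( \exp\{-{\rm i}\int_{0}^{t}A\} \)) are internally consistent and do close the algebra onto the stated form of (\ref{NLS}); since the paper uses a differently oriented frame and \( \psi=\psi_{1}+{\rm i}\psi_{2} \), the two constructions produce initial data \( q_{0} \) differing by a constant phase, which is harmless for the existence statement but worth stating explicitly when matching your \( q_{0} \) against the paper's.
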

Theorem \ref{hasi1} and \ref{hasi2} together states that the solvability of 
problem (\ref{slant}) and problem (\ref{plane}) are equivalent in 
suitable Sobolev spaces.
The two theorems also imply that the compatibility conditions for
(\ref{slant}) and (\ref{plane}) correspond to each other 
through the generalized Hasimoto transformation.
Recall from the introduction that the solvability of 
problem (\ref{slant}) and problem (\ref{plane}) are already known.
Hence, the solvability of either problem in itself is not new,
but the fact that the solvability of the two problems are equivalent is new.

As an application of the above two theorems, we prove the 
following theorem.
\begin{Th}
The plane wave solution \( q_{R}\) of problem {\rm (\ref{plane})} given by
\begin{align*}
q_{R}(t)=-\frac{1}{R}\exp \left\{ -\frac{{\rm i}t}{2R^2} \right\}
\end{align*}
with \( \displaystyle R>\frac{L}{\pi}\) is orbitally stable in
\( H^{2}(I_{L}) \). More specifically, for any \( \varepsilon >0\), there exists a 
\( \delta >0\) such that for any \( \phi_{0}\in H^{2}(I_{L})\) satisfying the 
\( 0\)-th order compatibility condition for {\rm (\ref{plane})} and 
\( \| \phi_{0} \|_{2} \leq \delta \), the solution 
\( q\in C\big( [0,\infty);H^{2}(I_{L})\big) \cap C^{1}\big( [0,\infty);L^{2}(I_{L})\big) \) of problem 
{\rm (\ref{plane})} with initial datum
\( q_{0}=q_{R}(0)+\phi_{0}\) satisfies
\begin{align*}
\sup_{t\geq 0} \inf_{\theta \in \mathbf{R}} 
\| \exp({\rm i}\theta)q(t)-q_{R}(t)\|_{2} < \varepsilon.
\end{align*}
\label{stability}
\end{Th}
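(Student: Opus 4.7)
\textbf{Proof plan for Theorem \ref{stability}.}
The strategy is to reduce the stability of the plane wave $q_R$ for {\rm (\ref{plane})} to a stability estimate for a corresponding ``arc-shaped'' solution of the VFE problem {\rm (\ref{slant})}, which has already been established by the author in \cite{23,27}, and to transfer this estimate back through the generalized Hasimoto transformation of Theorems \ref{hasi1} and \ref{hasi2}. The first observation is that, because $q_R$ is spatially constant with modulus $1/R$, the filament corresponding to $q_R$ has constant curvature $1/R$ and vanishing torsion; hence it is a planar circular arc of radius $R$, evolving under the VFE by uniform rotation about the axis through the center of the arc perpendicular to its plane. The condition $R > L/\pi$ guarantees that this arc of arc-length $L$ is strictly shorter than a semicircle, placing this explicit solution $\mbox{\mathversion{bold}$v$}_R$ in the regime analyzed in \cite{23,27}.

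Given a perturbation $\phi_0 \in H^2(I_L)$ of $q_R(0)$ with $\|\phi_0\|_2 \leq \delta$ satisfying the $0$-th order compatibility condition for {\rm (\ref{plane})}, I first apply Theorem \ref{hasi1} to $q_0 = q_R(0) + \phi_0$ to produce a corresponding initial filament $\mbox{\mathversion{bold}$v$}_0 \in H^3(I_L)$ and boundary datum $\mbox{\mathversion{bold}$b$} \in \mathbf{S}^2$ satisfying the compatibility conditions for {\rm (\ref{slant})} up to order $1$. By tracing through the construction underlying Theorem \ref{hasi1}, which essentially integrates an ODE system driven by $q_0$ to reconstruct a frame and then the tangent $\mbox{\mathversion{bold}$v$}_0$, one shows that this lifting is Lipschitz continuous at $q_R(0)$, yielding an estimate of the form
\[
\| \mbox{\mathversion{bold}$v$}_0 - \mbox{\mathversion{bold}$v$}_R(0) \|_3 + | \mbox{\mathversion{bold}$b$} - \mbox{\mathversion{bold}$v$}_R(L,0) | \leq C\delta
\]
after normalizing by the rotational degree of freedom that fixes $\mbox{\mathversion{bold}$v$}_0(0) = \mbox{\mathversion{bold}$e$}_1$.

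The stability estimates for arc-shaped solutions proved in \cite{23,27} then apply to the VFE solution $\mbox{\mathversion{bold}$v$}$ issuing from this data. They are obtained by standard energy estimates applied to the perturbation $\mbox{\mathversion{bold}$w$} := \mbox{\mathversion{bold}$v$} - \mbox{\mathversion{bold}$v$}_R$, exploiting the explicit rotating-arc structure of $\mbox{\mathversion{bold}$v$}_R$, and give the time-global bound $\sup_{t\geq 0} \|\mbox{\mathversion{bold}$v$}(t) - \mbox{\mathversion{bold}$v$}_R(t)\|_3 \leq C'\delta$ modulo a rigid motion that is a symmetry of the VFE. Finally, Theorem \ref{hasi2}, applied at each time slice, transfers this VFE estimate to the NLS side: because the curvature and torsion are invariant under spatial rotation, a rotation of $\mbox{\mathversion{bold}$v$}$ about $\mbox{\mathversion{bold}$e$}_1$ corresponds under the Hasimoto map to a global phase rotation $q \mapsto \exp({\rm i}\theta)q$. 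This is precisely the source of the infimum over $\theta$ in the conclusion, and combining with a quantitative-continuity estimate for the forward Hasimoto map yields $\sup_{t\geq 0} \inf_{\theta \in \mathbf{R}} \|\exp({\rm i}\theta) q(t) - q_R(t)\|_2 < \varepsilon$ once $\delta$ is chosen small enough.

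The principal obstacle is the upgrade of Theorems \ref{hasi1} and \ref{hasi2} from existence/correspondence statements to quantitatively continuous maps in a neighborhood of the plane wave and the arc respectively. Specifically, one must verify that Neumann-type perturbations of $q_R(0)$ in $H^2$ produce $H^3$-small perturbations of the planar arc that lie in the admissible class for the author's previous stability estimates (in particular with boundary data matching up to rotation), and that the forward transformation converts $H^3$ filament perturbations back to $H^2$ perturbations of $q$, with constants depending only on $L$ and $R$. Once this Lipschitz continuity of the Hasimoto correspondence at the base solution is in hand, the three steps above chain together to give the claim.
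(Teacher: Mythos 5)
Your plan follows the same architecture as the paper's proof: lift the perturbed datum \( q_{R}(0)+\phi_{0}\) through the generalized Hasimoto transformation with a quantitative estimate (Section 3.3.1 of the paper proves exactly such a Lipschitz-type bound, \( \| \tilde{\mbox{\mathversion{bold}$v$}}_{0}-\mbox{\mathversion{bold}$v$}_{0}\|_{3}\leq C(\|\varphi_{0}\|_{2}+\|\varphi_{0}\|_{2}^{3})\), via Gronwall on the frame ODEs), identify \( q_{R}\) with the arc tangent \( \mbox{\mathversion{bold}$v$}^{R}\), invoke the stability estimates for the arc from \cite{23,27}, and transfer back. But two of your structural claims are off. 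First, \( \mbox{\mathversion{bold}$v$}^{R}\) is a \emph{stationary} solution of (\ref{slant}): since \( \mbox{\mathversion{bold}$v$}^{R}_{ss}=-R^{-2}\mbox{\mathversion{bold}$v$}^{R}\), one has \( \mbox{\mathversion{bold}$v$}^{R}\times \mbox{\mathversion{bold}$v$}^{R}_{ss}\equiv \mbox{\mathversion{bold}$0$}\) (the filament itself translates along the binormal; nothing rotates), and the stability estimate (Proposition \ref{arcstable}) is a plain Lyapunov estimate with \emph{no} quotient by rigid motions: Assumption \ref{assump} forces \( \mbox{\mathversion{bold}$\varphi$}(0,t)=\mbox{\mathversion{bold}$\varphi$}(L,t)=\mbox{\mathversion{bold}$0$}\), which leaves no rotational freedom, and the proof runs on coercivity of the conserved quantity \( E(\mbox{\mathversion{bold}$\varphi$})=\|\mbox{\mathversion{bold}$\varphi$}_{s}\|^{2}-R^{-2}\|\mbox{\mathversion{bold}$\varphi$}\|^{2}\) via the Poincar\'e inequality — this, not the semicircle geometry per se, is where \( R>L/\pi \) enters. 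Your ``modulo a rigid motion'' is therefore both unnecessary and incompatible with the clamped boundary data.

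Second, and this is the genuine gap: you attribute the phase freedom \( \inf_{\theta}\) to spatial rotations of the filament mapping to \( q\mapsto e^{{\rm i}\theta}q\). That degree of freedom is frozen in the construction (the frame is normalized to \( (\mbox{\mathversion{bold}$e$}_{1},-\mbox{\mathversion{bold}$e$}_{2},\mbox{\mathversion{bold}$e$}_{3})\) at \( s=0\)), and since the VFE estimate is not modulo rotation there is nothing of that kind to absorb. The actual origin of \( \theta \) is the \emph{gauge factor} in the Hasimoto map, \( q=\psi\exp\{ \tfrac{{\rm i}}{2}\int_{0}^{t}\lvert \psi(0,\tau)\rvert^{2}\,d\tau \}\). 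The frame coefficients are indeed controlled uniformly in time, \( \|\psi_{1}(t)-\psi_{2}(t)\|_{2}\leq CM\) whenever \( \|\mbox{\mathversion{bold}$v$}_{1}(t)-\mbox{\mathversion{bold}$v$}_{2}(t)\|_{3}\leq C_{0}M\) (Proposition \ref{lyaporb}), but the two gauge phases differ by a spatially constant factor \( \exp\{ \tfrac{{\rm i}}{2}\int_{0}^{t}(\lvert\psi_{2}(0,\tau)\rvert^{2}-\lvert\psi_{1}(0,\tau)\rvert^{2})\,d\tau \}\) whose argument can drift linearly in time; for the arc \( \lvert\psi(0,\tau)\rvert^{2}\equiv 1/R^{2}\), and (up to sign convention) this gauge factor is precisely the oscillating phase of \( q_{R}\). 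Hence \( \|q(t)-q_{R}(t)\|_{2}\) itself need not stay small, and the theorem is genuinely orbital: the estimate closes only through \( \inf_{\theta}\|e^{{\rm i}\theta}q_{1}(t)-q_{2}(t)\|_{2}\leq \|\psi_{1}(t)-\psi_{2}(t)\|_{2}\), the infimum absorbing the secular relative gauge phase. Your rotation mechanism cannot supply this, and without the gauge analysis the time-slice transfer either stalls or wrongly suggests plain (non-orbital) stability. Your worry about boundary matching, on the other hand, is legitimate but is resolved differently than ``up to rotation'': the lift of a datum satisfying the \( 0\)-th order condition for (\ref{plane}) automatically satisfies the compatibility conditions for (\ref{slant}) up to order \( 1\) (checked in Section 3.1), and the final step uses the common boundary datum \( \mbox{\mathversion{bold}$b$}=\mbox{\mathversion{bold}$v$}^{R}(L)\).
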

\noindent
From here on, we will refer to the maps 
\( q_{0}\mapsto \mbox{\mathversion{bold}$v$}_{0}\) and 
\( \mbox{\mathversion{bold}$v$} \mapsto q \) implied in 
Theorem \ref{hasi1} as the 
generalized Hasimoto transformation, and the maps
\( \mbox{\mathversion{bold}$v$}_{0}\mapsto q_{0}\) and 
\( q \mapsto \mbox{\mathversion{bold}$v$} \) implied in 
Theorem \ref{hasi2} as the 
inverse generalized Hasimoto transformation.

%
%


\section{Proof of Main Theorems}
\setcounter{equation}{0}

In this section, we prove Theorem \ref{hasi1}, \ref{hasi2}, and \ref{stability}.
The transformation utilized in the proof of theorem \ref{hasi1} and 
\ref{hasi2} is mostly due to Koiso \cite{12} with some modifications
to accommodate the presence of boundary conditions.

Although it is implied in Theorem \ref{hasi1} and \ref{hasi2}, 
we explicitly state the solvability of problems (\ref{slant}) and 
(\ref{plane}) in the form that we will take for granted throughout 
this paper.
\begin{Th}
Let \( \mbox{\mathversion{bold}$v$}_{0}\in H^{3}(I_{L})\) and
\( \mbox{\mathversion{bold}$b$}\in \mathbf{S}^{2}\) satisfy
\( \lvert \mbox{\mathversion{bold}$v$}_{0}\rvert \equiv 1\) and 
the compatibility condition for {\rm (\ref{slant})} up to order \( 1\).
Then, there exists a unique solution 
\( \mbox{\mathversion{bold}$v$}\in C\big( [0,\infty);H^{3}(I_{L})\big) \cap C^{1}\big( [0,\infty);H^{1}(I_{L})\big) \) of problem {\rm (\ref{slant})}.
Furthermore, \( \lvert \mbox{\mathversion{bold}$v$}(s,t)\rvert =1\) for 
all \( s\in I_{L}\) and \( t>0\). Additionally, there exists 
\( c_{\ast}>0\) such that
\( \mbox{\mathversion{bold}$v$}\) satisfies
\begin{align}
\sup_{t>0} \| \mbox{\mathversion{bold}$v$}(t)\|_{3}
\leq
c_{\ast},
\label{energy}
\end{align}
where \( c_{\ast}>0 \) depends on \( L\) and 
\( \|\mbox{\mathversion{bold}$v$}_{0}\|_{3}\)
and is non-decreasing with respect to 
\( \| \mbox{\mathversion{bold}$v$}_{0}\|_{3}\)
\label{existv}
\end{Th}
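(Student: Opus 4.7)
The theorem combines local well-posedness with a time-uniform a priori bound; the author already indicates that the former is established in \cite{23} and the latter in the forthcoming \cite{27}, so the role of this sketch is to outline the strategy those references carry out. The plan is to construct a solution by parabolic regularization, verify pointwise preservation of $|\mathbf{v}|=1$, and then upgrade the local solution to a global one by time-uniform $H^3$ estimates exploiting the conservation structure of the VFE together with the time-invariance of the boundary data.

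For local existence, I would replace (\ref{slant}) by the strictly parabolic regularization
\[
\mathbf{v}_t = \mathbf{v}\times\mathbf{v}_{ss} + \varepsilon\bigl(\mathbf{v}_{ss}+|\mathbf{v}_s|^2\mathbf{v}\bigr), \quad \varepsilon>0,
\]
with the same initial and boundary data. The added term is the tangential projection (onto $\mathbf{S}^2$) of $\mathbf{v}_{ss}$, and one checks that $g:=|\mathbf{v}|^2-1$ satisfies a linear homogeneous parabolic equation with $g|_{t=0}=0$ and $g|_{\partial I_L}=0$, so the constraint $|\mathbf{v}|=1$ is preserved. Standard parabolic theory gives a regularized solution in $H^3$; deriving $\varepsilon$-uniform bounds and sending $\varepsilon\to 0^+$ yields a local solution with the claimed regularity. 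For the limiting equation, $|\mathbf{v}|\equiv 1$ follows from $\partial_t|\mathbf{v}|^2=2\mathbf{v}\cdot(\mathbf{v}\times\mathbf{v}_{ss})=0$.

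For the uniform $H^3$ bound (\ref{energy}), I would extract conservation laws step by step. The $L^2$ bound is immediate from $|\mathbf{v}|=1$. Because the boundary data are time-independent, $\mathbf{v}_t$ vanishes at $s=0,L$, forcing $\mathbf{v}\times\mathbf{v}_{ss}|_{\partial I_L}=0$ for all $t$; this propagates the $1$-st order compatibility condition in time. Integration by parts then gives
\[
\tfrac{d}{dt}\|\mathbf{v}_s\|^2 = -2(\mathbf{v}_{ss},\mathbf{v}\times\mathbf{v}_{ss}) + 2\bigl[\mathbf{v}_s\cdot(\mathbf{v}\times\mathbf{v}_{ss})\bigr]_0^L = 0,
\]
so $\|\mathbf{v}_s\|$ is conserved. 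For the $H^2$ and $H^3$ bounds I would compute $\frac{d}{dt}\|\mathbf{v}_{ss}\|^2$ and $\frac{d}{dt}\|\mathbf{v}_{sss}\|^2$, using the pointwise identities $\mathbf{v}\cdot\mathbf{v}_s=0$ and $\mathbf{v}\cdot\mathbf{v}_{ss}=-|\mathbf{v}_s|^2$ to cancel the highest-order terms produced by the cross product, and then close with Gronwall against the already-controlled lower norms.

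The main obstacle is controlling the boundary contributions generated at top order. The equation is dispersive rather than parabolic, so bad boundary terms cannot be absorbed into a dissipative quantity; instead one must combine the propagated compatibility identities at the endpoints with Sobolev trace inequalities to show that these contributions either vanish exactly or are dominated by the energy itself. This is the technical heart of the estimates in \cite{23,27}. Once these cancellations are established, the bound $c_\ast$ depending only on $L$ and $\|\mathbf{v}_0\|_3$, and non-decreasing in the latter, follows by tracking the dependencies in the resulting differential inequality.
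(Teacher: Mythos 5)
Your local-existence sketch (parabolic regularization preserving \( \lvert \mathbf{v}\rvert =1 \), which you verify correctly via the linear parabolic equation for \( g=\lvert \mathbf{v}\rvert^{2}-1 \)) and your observation that \( \|\mathbf{v}_{s}\| \) is conserved because the time-independent boundary data force \( \mathbf{v}_{t}\rvert_{s=0,L}=\mathbf{0} \) are consistent with what the paper takes from \cite{23}; note the paper itself does not prove Theorem \ref{existv} but cites \cite{23}, disclosing only the mechanism behind (\ref{energy}). However, your argument has a genuine gap at exactly the point where the theorem is strongest. You propose to compute \( \frac{d}{dt}\|\mathbf{v}_{ss}\|^{2} \) and \( \frac{d}{dt}\|\mathbf{v}_{sss}\|^{2} \) and ``close with Gronwall against the already-controlled lower norms.'' This fails twice over. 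First, the residual terms are not lower order: expanding \( \frac{d}{dt}\|\mathbf{v}_{ss}\|^{2}=2(\mathbf{v}_{ss},\partial_{s}^{2}(\mathbf{v}\times\mathbf{v}_{ss})) \) and integrating by parts leaves, besides boundary traces, a cubic term of the type \( (\mathbf{v}_{ss},\mathbf{v}_{s}\times\mathbf{v}_{sss}) \) that contains the third derivative, so a differential inequality at the \( H^{2} \) level does not even close in the \( H^{2} \) norm. Second, and more fundamentally, even if it closed, Gronwall yields at best a bound growing in time, e.g. \( \|\mathbf{v}(t)\|_{3}\leq Ce^{Ct} \), whereas (\ref{energy}) asserts \( \sup_{t>0}\|\mathbf{v}(t)\|_{3}\leq c_{\ast} \) with \( c_{\ast} \) depending only on \( L \) and \( \|\mathbf{v}_{0}\|_{3} \); this time-uniformity is exactly what the stability argument of Section 3.3 (Proposition \ref{lyaporb}) consumes, so it cannot be weakened.

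The missing idea is the hierarchy of \emph{exactly conserved} modified energies coming from the integrable structure of the LIE, recorded in the paper as (\ref{conserve1}) and (\ref{conserve2}): the combinations \( \|\mathbf{v}_{ss}\|^{2}-\frac{5}{4}\|\lvert\mathbf{v}_{s}\rvert^{2}\|^{2} \) and \( \|\mathbf{v}_{sss}\|^{2}-\frac{7}{2}\|\lvert\mathbf{v}_{s}\rvert\lvert\mathbf{v}_{ss}\rvert\|^{2}-14\|\mathbf{v}_{s}\cdot\mathbf{v}_{ss}\|^{2}+\frac{21}{8}\|\lvert\mathbf{v}_{s}\rvert^{3}\|^{2} \) are constant in time, the boundary terms in the verification vanishing by the propagated conditions \( \mathbf{v}_{t}\rvert_{s=0,L}=\mathbf{0} \) and \( \mathbf{v}\times\mathbf{v}_{ss}\rvert_{s=0,L}=\mathbf{0} \) that you correctly identified. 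The problematic cubic terms above are not estimated away but absorbed into these conserved functionals. Given their conservation, no Gronwall is needed: \( \|\mathbf{v}(t)\|=\sqrt{L} \) from \( \lvert\mathbf{v}\rvert\equiv 1 \), \( \|\mathbf{v}_{s}(t)\|=\|\mathbf{v}_{0s}\| \), and the correction terms are absorbed by one-dimensional interpolation (e.g. \( \|f\|_{L^{4}}^{4}\leq C\|f\|^{3}\|f\|_{1} \)) and Young's inequality into small multiples of \( \|\mathbf{v}_{ss}(t)\|^{2} \), respectively \( \|\mathbf{v}_{sss}(t)\|^{2} \), plus constants determined by \( L \) and \( \|\mathbf{v}_{0}\|_{3} \); this is also precisely how \( c_{\ast} \) comes out non-decreasing in \( \|\mathbf{v}_{0}\|_{3} \). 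Replacing your Gronwall step by this conservation-law argument is the technical heart of \cite{23}; as written, your sketch would not prove the stated theorem.
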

\begin{Th}
Let \( q_{0}\in H^{2}(I_{L})\) satisfy the \(0\)-th order 
compatibility condition for {\rm (\ref{plane})}. Then, 
there exists a unique solution 
\( q\in C\big( [0,\infty);H^{2}(I_{L})\big) \cap C^{1}\big( [0,\infty);L^{2}(I_{L})\big) \) of problem {\rm (\ref{plane})}.

\label{existq}
\end{Th}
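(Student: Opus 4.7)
The plan is to reduce problem (\ref{plane}) on the interval $I_{L}$ to the well-posedness of the periodic problem (\ref{pNLS}) on the torus $\mathbf{T}=\mathbf{R}/[-L,L]$, which is the route alluded to in the introduction. Given $q_{0}\in H^{2}(I_{L})$ with $q_{0s}(0)=q_{0s}(L)=0$, first extend $q_{0}$ to a function $\tilde{q}_{0}$ on $\mathbf{T}$ by even reflection across $s=0$, i.e.\ $\tilde{q}_{0}(s)=q_{0}(|s|)$ for $s\in[-L,L]$. The Neumann condition at $s=0$ ensures $\tilde{q}_{0}\in H^{2}([-L,L])$, and the Neumann condition at $s=L$ ensures that both $\tilde{q}_{0}$ and its first derivative are consistent at the identified endpoints $s=\pm L$, so that $\tilde{q}_{0}\in H^{2}(\mathbf{T})$.

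Second, invoke the known global well-posedness of problem (\ref{pNLS}) in $H^{2}(\mathbf{T})$. Local-in-time existence of a unique solution $u\in C([0,T_{*});H^{2}(\mathbf{T}))$ follows from a standard contraction argument (or from Bourgain \cite{41}) since $H^{2}(\mathbf{T})$ is an algebra. The conservation laws available from the complete integrability of (\ref{NLS}) (Zakharov and Shabat \cite{40}), together with the lower order conserved quantities $\|u\|_{L^{2}}$ and the Hamiltonian, yield an a priori bound on $\|u(t)\|_{H^{2}(\mathbf{T})}$ uniform in $t$, so the solution extends globally. The regularity $u\in C^{1}([0,\infty);L^{2}(\mathbf{T}))$ then follows directly from the equation, since $u_{ss}$ and $|u|^{2}u$ both lie in $C([0,\infty);L^{2}(\mathbf{T}))$.

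Third, verify that $u$ inherits the reflection symmetries of $\tilde{q}_{0}$. The equation in (\ref{pNLS}) is invariant under both $s\mapsto-s$ and $s\mapsto 2L-s$ on $\mathbf{T}$, and $\tilde{q}_{0}$ is fixed by both involutions. Uniqueness in $H^{2}(\mathbf{T})$ then forces $u(-s,t)=u(s,t)$ and $u(2L-s,t)=u(s,t)$ for all $t\geq 0$. Because $H^{2}(\mathbf{T})\hookrightarrow C^{1}(\mathbf{T})$, these symmetries imply $u_{s}(0,t)=u_{s}(L,t)=0$. Restricting $u$ to $s\in I_{L}$ produces the desired solution $q$ of (\ref{plane}) with the claimed regularity. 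Uniqueness on $I_{L}$ is immediate: any two such solutions can be extended by reflection and periodicity to two solutions of (\ref{pNLS}) with the same initial datum, which must coincide by uniqueness on $\mathbf{T}$.

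The main obstacle is the rigorous justification of the symmetry-preservation step at the $H^{2}$ regularity level. For classical solutions it is a direct consequence of the invariance of the equation, but in the Sobolev setting one typically approximates $\tilde{q}_{0}$ by smooth symmetric data, applies the symmetry preservation to the smooth solutions, and passes to the limit via the continuous dependence of the flow in $H^{2}(\mathbf{T})$. The rest of the argument consists of checking that the reflection-plus-periodic-extension map is a bounded linear embedding $H^{2}(I_{L})\cap\{q_{s}(0)=q_{s}(L)=0\}\hookrightarrow H^{2}(\mathbf{T})$, which is routine.
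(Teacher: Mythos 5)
Your proposal is correct and takes essentially the same route as the paper: the paper disposes of Theorem \ref{existq} in one line, observing that problem (\ref{plane}) reduces to the periodic problem (\ref{pNLS}) by reflection and periodic extension, with global well-posedness on the torus credited to Bourgain \cite{41,63} and the uniform $H^{m}$ bounds to the integrability-based conserved quantities. Your write-up simply supplies the details the paper leaves implicit (the Neumann conditions making the even extension $H^{2}$ across both seams, symmetry preservation via uniqueness, and restriction back to $I_{L}$), all consistent with the paper's intended argument.
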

Theorem \ref{existv} is proved by the author in \cite{23} and 
Theorem \ref{existq} is essentially due to 
Bourgain \cite{41,63} since problem (\ref{plane}) can be reduced to the 
initial value problem on the torus by reflection and 
periodic extension.

\subsection{Proof of Theorem \ref{hasi1}}
\label{th1}

Let \( q_{0}\in H^{2}(I_{L}) \) satisfy the \(0\)-th order
compatibility condition for problem (\ref{plane}) and 
define \( q^{0}_{1}\) and \( q^{0}_{2}\) by
\begin{align*}
q_{0}(s) = q^{0}_{1}(s) + {\rm i}q^{0}_{2}(s).
\end{align*}
Furthermore, we define \( \mbox{\mathversion{bold}$v$}_{0},\mbox{\mathversion{bold}$e$}^{0}\), and \( \mbox{\mathversion{bold}$w$}^{0}\)
as the solution of the following system of ordinary differential equations.
\begin{align*}
\left\{
\begin{array}{ll}
\mbox{\mathversion{bold}$v$}_{0s}=
q^{0}_{1}\mbox{\mathversion{bold}$e$}^{0}
+q^{0}_{2}\mbox{\mathversion{bold}$w$}^{0}, & s\in I_{L}, \\[3mm]
\mbox{\mathversion{bold}$e$}^{0}_{s}=-q^{0}_{1}\mbox{\mathversion{bold}$v$}_{0}, &
s\in I_{L}, \\[3mm]
\mbox{\mathversion{bold}$w$}^{0}_{s}=-q^{2}_{0}\mbox{\mathversion{bold}$v$}_{0}, &
s\in I_{L}, \\[3mm]
(\mbox{\mathversion{bold}$v$}_{0}, \mbox{\mathversion{bold}$e$}^{0},
\mbox{\mathversion{bold}$w$}^{0})(0)=
(\mbox{\mathversion{bold}$e$}_{1},-\mbox{\mathversion{bold}$e$}_{2},
\mbox{\mathversion{bold}$e$}_{3}). & \ 
\end{array}\right.
\end{align*}
By direct calculations, we see that the triplet
\( \{ \mbox{\mathversion{bold}$v$}_{0}, \mbox{\mathversion{bold}$e$}^{0},\mbox{\mathversion{bold}$w$}^{0} \}\) is an orthonormal basis of 
\( \mathbf{R}^{3}\) for all \( s\in I_{L}\). 
Set \( \mbox{\mathversion{bold}$b$}:= \mbox{\mathversion{bold}$v$}_{0}(L)\),
and we see that by definition, 
\( \mbox{\mathversion{bold}$v$}_{0}\) and \( \mbox{\mathversion{bold}$b$} \)
satisfy the \( 0\)-th order
compatibility condition of problem (\ref{slant}).
We further calculate and see that
\begin{align*}
\mbox{\mathversion{bold}$v$}_{0ss}=
(-(q^{0}_{1})^{2}-(q^{0}_{2})^{2})\mbox{\mathversion{bold}$v$}_{0}
+ q^{0}_{1s}\mbox{\mathversion{bold}$e$}^{0}
+ q^{0}_{2s}\mbox{\mathversion{bold}$w$}^{0},
\end{align*}
and hence,
\begin{align*}
\mbox{\mathversion{bold}$v$}_{0}\times \mbox{\mathversion{bold}$v$}_{0ss}
&=
q^{0}_{1s}\mbox{\mathversion{bold}$v$}_{0}\times \mbox{\mathversion{bold}$e$}^{0}
+ q^{0}_{2s}\mbox{\mathversion{bold}$v$}_{0}\times \mbox{\mathversion{bold}$w$}^{0}
\\[3mm]
&=
-q^{0}_{1s}\mbox{\mathversion{bold}$w$}^{0}
+q^{0}_{2s}\mbox{\mathversion{bold}$e$}^{0}.
\end{align*}
Here, we have used the fact that 
\( \{ \mbox{\mathversion{bold}$v$}_{0}, \mbox{\mathversion{bold}$e$}^{0},\mbox{\mathversion{bold}$w$}^{0} \} \) is an orthonormal basis 
of \( \mathbf{R}^{3}\) and the orientation given at \( s=0 \)
yields 
\( \mbox{\mathversion{bold}$v$}_{0}\times \mbox{\mathversion{bold}$e$}^{0} = -\mbox{\mathversion{bold}$w$}^{0}\)
and
\( \mbox{\mathversion{bold}$v$}_{0}\times \mbox{\mathversion{bold}$w$}^{0}=\mbox{\mathversion{bold}$e$}^{0}\).
Since \( q_{0}\) satisfies the \(0\)-th order compatibility condition
for (\ref{plane}), 
we see that 
\begin{align*}
\mbox{\mathversion{bold}$v$}_{0}(0)\times \mbox{\mathversion{bold}$v$}_{0ss}(0)
=
\mbox{\mathversion{bold}$v$}_{0}(L)\times \mbox{\mathversion{bold}$v$}_{0ss}(L)
=
\mbox{\mathversion{bold}$0$}
\end{align*}
and \( \mbox{\mathversion{bold}$v$}_{0}\) and
\( \mbox{\mathversion{bold}$b$}\) satisfies the \( 1\)-st order
compatibility condition for (\ref{slant}).

Let, \( \mbox{\mathversion{bold}$v$}\in C\big( [0,\infty);H^{3}(I_{L})\big) \cap C^{1}\big( [0,\infty);H^{1}(I_{L})\big) \) be the solution of 
problem (\ref{slant}) with initial datum \( \mbox{\mathversion{bold}$v$}_{0}\)
and boundary datum \( \mbox{\mathversion{bold}$b$}\) just obtained.
From Theorem \ref{existv}, we know that 
\( \lvert \mbox{\mathversion{bold}$v$}(s,t) \rvert =1\) 
for all \( s\in I_{L}\) and \( t>0\).
We now define \( \mbox{\mathversion{bold}$e$} \) as the solution of 
\begin{align*}
\left\{
\begin{array}{ll}
\mbox{\mathversion{bold}$e$}_{s}=-(\mbox{\mathversion{bold}$v$}_{s}\cdot
\mbox{\mathversion{bold}$e$})\mbox{\mathversion{bold}$v$}, & s\in I_{L}, \\[3mm]
\mbox{\mathversion{bold}$e$}(0) = \mbox{\mathversion{bold}$e$}_{2}, & \
\end{array}\right.
\end{align*}
and set \( \mbox{\mathversion{bold}$w$}=\mbox{\mathversion{bold}$v$}\times\mbox{\mathversion{bold}$e$}\). 
Here, \( \cdot \) is the standard inner product of \( \mathbf{R}^{3}\).
Note that 
\(\mbox{\mathversion{bold}$e$}\) and 
\( \mbox{\mathversion{bold}$w$}\) depend on \( t>0\) as a parameter
through \( \mbox{\mathversion{bold}$v$}\).
Again, we see that 
\( \{ \mbox{\mathversion{bold}$v$},\mbox{\mathversion{bold}$e$},\mbox{\mathversion{bold}$w$}\} \) is an orthonormal basis of
\( \mathbf{R}^{3}\)
for all \( s\in I_{L}\) and \( t>0 \).
Since \( \mbox{\mathversion{bold}$v$}\cdot \mbox{\mathversion{bold}$v$}_{s}\equiv 0\), 
we have the decomposition
\begin{align}
\mbox{\mathversion{bold}$v$}_{s}=
\psi _{1}\mbox{\mathversion{bold}$e$}+\psi_{2}\mbox{\mathversion{bold}$w$}
\label{vs}
\end{align}
for some \( \psi_{1}(s,t) \) and \( \psi_{2}(s,t)\).
Then we also see that
\begin{align}
\mbox{\mathversion{bold}$e$}_{s} &= -\psi_{1}\mbox{\mathversion{bold}$v$}, 
\label{es}\\[3mm]
\mbox{\mathversion{bold}$w$}_{s} &= -\psi_{2}\mbox{\mathversion{bold}$v$}.
\label{ws}
\end{align}
Taking the inner product of 
\( \mbox{\mathversion{bold}$e$} \) and 
\( \mbox{\mathversion{bold}$w$} \) with equation
(\ref{vs}) we have
\begin{align*}
\psi_{1}&=\mbox{\mathversion{bold}$e$}
\cdot \mbox{\mathversion{bold}$v$}_{s}, \\[3mm]
\psi_{2}&=\mbox{\mathversion{bold}$w$}
\cdot \mbox{\mathversion{bold}$v$}_{s},
\end{align*}
from which we deduce that
\( \psi_{1},\psi_{2}\in C\big( [0,\infty);H^{2}(I_{L})\big) \) since
\( \mbox{\mathversion{bold}$v$}\in C\big( [0,\infty);H^{3}(I_{L})\big)  \).
From equation (\ref{vs}), we see that
\begin{align*}
\lvert \psi_{1}\rvert ^{2} + \lvert \psi_{2}\rvert ^{2}
=
\lvert \mbox{\mathversion{bold}$v$}_{s}\rvert ^{2},
\end{align*}
which implies
\begin{align*}
\|\psi_{1}(t)\|^{2} + \|\psi_{2}(t)\|^{2} 
=
\| \mbox{\mathversion{bold}$v$}_{s}(t)\|^{2}.
\end{align*}
Furthermore, we have
\begin{align*}
\psi_{1s}\mbox{\mathversion{bold}$e$} + \psi_{2s}\mbox{\mathversion{bold}$w$}
&=
\mbox{\mathversion{bold}$v$}_{ss} + (\psi_{1})^{2}\mbox{\mathversion{bold}$v$}
+
(\psi_{2})^{2}\mbox{\mathversion{bold}$v$}, \\[3mm]
\psi_{1ss}\mbox{\mathversion{bold}$e$}+\psi_{2ss}\mbox{\mathversion{bold}$w$}
&=
\mbox{\mathversion{bold}$v$}_{sss}
+
3\psi_{1}\psi_{1s}\mbox{\mathversion{bold}$v$}
+
3\psi_{2}\psi_{2s}\mbox{\mathversion{bold}$v$}
+
\psi_{1}\big( (\psi_{1})^{2}+(\psi_{2})^{2}\big)\mbox{\mathversion{bold}$e$}\\[3mm]
& \hspace{10mm} +
\psi_{2} \big( (\psi_{1})^{2}+(\psi_{2})^{2}\big)\mbox{\mathversion{bold}$w$},
\end{align*}
and after taking the inner product of the above equations with 
\( \mbox{\mathversion{bold}$e$}\) and \( \mbox{\mathversion{bold}$w$} \),
we derive
\begin{align*}
\| \psi_{1s}(t)\| &\leq \| \mbox{\mathversion{bold}$v$}_{ss}(t)\|, \\[3mm]
\| \psi_{2s}(t)\| &\leq \| \mbox{\mathversion{bold}$v$}_{ss}(t)\|, \\[3mm]
\| \psi_{1ss}(t)\| &\leq \| \mbox{\mathversion{bold}$v$}_{sss}(t)\| 
+ C\| \mbox{\mathversion{bold}$v$}_{s}(t)\|_{1}^{2}
\| \mbox{\mathversion{bold}$v$}_{s}(t)\|, \\[3mm]
\| \psi_{2ss}(t)\| &\leq \| \mbox{\mathversion{bold}$v$}_{sss}(t)\| 
+ C\| \mbox{\mathversion{bold}$v$}_{s}(t)\|_{1}^{2}
\| \mbox{\mathversion{bold}$v$}_{s}(t)\|,
\end{align*}
where \( C>0\) is determined from the embedding
\(  H^{1}(I_{L}) \hookrightarrow L^{\infty}(I_{L}) \).
Hence, we have
\begin{align}
\| \psi_{1}(t)\|_{2} + \| \psi_{2}(t)\|_{2}
\leq
C\big(
\| \mbox{\mathversion{bold}$v$}(t) \|_{3} 
+
\| \mbox{\mathversion{bold}$v$}(t) \|_{3}^{3}
\big),
\label{psiest}
\end{align}
for all \( t>0\), where \( C>0\) is independent of
\( t\) and \( \mbox{\mathversion{bold}$v$}\).

Since \( \mbox{\mathversion{bold}$v$}\cdot \mbox{\mathversion{bold}$v$}_{t}\equiv 0\), a decomposition of the form
\( \mbox{\mathversion{bold}$v$}_{t}=p_{1}\mbox{\mathversion{bold}$e$} + p_{2}\mbox{\mathversion{bold}$w$}\)
holds. From the equation \( (\mbox{\mathversion{bold}$v$}_{t})_{s} = (\mbox{\mathversion{bold}$v$}_{s})_{t} \) and
\( (\mbox{\mathversion{bold}$e$}_{t})_{s} = (\mbox{\mathversion{bold}$e$}_{s})_{t}\),
 we deduce that \( p_{1} = -\psi_{2s}\) and \( p_{2}=\psi_{1s}\).
Furthermore, 
\begin{align}
\mbox{\mathversion{bold}$v$}_{t}&=-\psi_{2s}\mbox{\mathversion{bold}$e$}
+ \psi_{1s}\mbox{\mathversion{bold}$w$}, \label{vt} \\[3mm]
\mbox{\mathversion{bold}$e$}_{t}&=\psi_{2s}\mbox{\mathversion{bold}$v$}
+
\bigg\{
-\frac{1}{2}\big((\psi_{1})^{2}+(\psi_{2})^{2}\big)
+\frac{1}{2}\big((\psi_{1})^{2}+(\psi_{2})^{2}\big)\big\rvert_{s=0} \bigg\}
\mbox{\mathversion{bold}$w$}, \nonumber \\[3mm]
\mbox{\mathversion{bold}$w$}_{t} 
&=
-\psi_{1s}\mbox{\mathversion{bold}$v$}
-
\bigg\{
-\frac{1}{2}\big((\psi_{1})^{2}+(\psi_{2})^{2}\big)
+\frac{1}{2}\big((\psi_{1})^{2}+(\psi_{2})^{2}\big)\big\rvert_{s=0} \bigg\}
\mbox{\mathversion{bold}$e$}, \nonumber 
\end{align}
holds. Here, \( \big\rvert_{s=0} \) denotes the trace at \( s=0\).
The above three equations along with the fact that
\( \mbox{\mathversion{bold}$v$}\in C^{1}\big([0,\infty);H^{1}(I_{L})\big) \), 
\( \psi_{1}=\mbox{\mathversion{bold}$e$}\cdot \mbox{\mathversion{bold}$v$}_{s}\),
and \( \psi_{2}=\mbox{\mathversion{bold}$w$}\cdot \mbox{\mathversion{bold}$v$}_{s}\)
implies that
\( \psi_{1},\psi_{2}\in C^{1}\big([0,\infty);L^{2}(I_{L}) \big) \).
This in turn allows us to calculate as follows.
\begin{align*}
\psi_{1t}&=-\psi_{2ss} +\bigg\{
-\frac{1}{2}\big((\psi_{1})^{2}+(\psi_{2})^{2}\big)
+\frac{1}{2}\big((\psi_{1})^{2}+(\psi_{2})^{2}\big)\big\rvert_{s=0} 
\bigg\}\psi_{2}
, \\[3mm]
\psi_{2t}
&= \psi_{1ss}
- \bigg\{
-\frac{1}{2}\big((\psi_{1})^{2}+(\psi_{2})^{2}\big)
+\frac{1}{2}\big((\psi_{1})^{2}+(\psi_{2})^{2}\big) \big\rvert_{s=0} 
\bigg\}\psi_{1}.
\end{align*}
Setting \( \psi = \psi_{1}+{\rm i}\psi_{2} \), we see that
\begin{align*}
{\rm i}\psi_{t}-\psi_{ss} = \frac{1}{2}\lvert \psi \rvert^{2}\psi 
- \frac{1}{2}\lvert \psi(0,t)\rvert ^{2}\psi,
\end{align*}
and after a gauge transform given by
\begin{align}
q(s,t)=\psi(s,t) \exp \bigg\{ \frac{{\rm i}}{2}\int^{t}_{0}
\lvert \psi(0,\tau )\rvert ^{2}
 \ d\tau\bigg\},
\label{gauge}
\end{align}
we see that \( q\) satisfies
\begin{align*}
{\rm i}q_{t} = q_{ss}+\frac{1}{2}\lvert q\rvert^{2}q.
\end{align*}
As \( t\) tends to zero in equation (\ref{vs}), we see 
from the uniqueness of the solution of ordinary differential equations that
\begin{align*}
q(s,0)=q_{0}(s).
\end{align*}
Finally, we see that by taking the derivative of the boundary condition 
in problem (\ref{slant}) with respect to \( t\), we see that
\( \mbox{\mathversion{bold}$v$}_{t}(0,t) = \mbox{\mathversion{bold}$v$}_{t}(L,t) =\mbox{\mathversion{bold}$0$} \), and from equation (\ref{vt}), we see that
\begin{align*}
q_{s}(0,t) = q_{s}(L,t) = 0.
\end{align*}
Also note that \( q\in C\big( [0,\infty);H^{2}(I_{L})\big) \cap C^{1}\big( [0,\infty);L^{2}(I_{L})\big) \) from the definition 
of \( q\) along with the fact that
\( \psi_{1},\psi_{2}\in C\big( [0,\infty);H^{2}(I_{L})\big) \cap C^{1}\big( [0,\infty);L^{2}(I_{L})\big) \).
Additionally, from equations (\ref{psiest}) and (\ref{gauge}) we have
\begin{align}
\| q(t)\|_{2}
\leq
C\big( \| \mbox{\mathversion{bold}$v$}(t)\|_{3}
+
\| \mbox{\mathversion{bold}$v$}(t)\|^{3}_{3} \big),
\label{qvestimate}
\end{align}
for all \( t\geq 0 \), where \( C>0\) is independent of \( t\) and 
\( \mbox{\mathversion{bold}$v$}\). Estimate (\ref{qvestimate}) will be
utilized later.
This finishes the proof of Theorem \ref{hasi1}. \\
\hfill \(  \Box\)


\subsection{Proof of Theorem \ref{hasi2} }

Let \( \mbox{\mathversion{bold}$v$}_{0}\in H^{3}(I_{L}) \) satisfying
\( \lvert \mbox{\mathversion{bold}$v$}_{0}\rvert \equiv 1\) 
and \( \mbox{\mathversion{bold}$b$} \in \mathbf{S}^{2}\) be 
initial and boundary datum for problem (\ref{slant}) satisfying the 
compatibility conditions up to order \( 1\). 
Define \( \mbox{\mathversion{bold}$e$}^{0} \) as the solution of
\begin{align*}
\left\{
\begin{array}{ll}
\mbox{\mathversion{bold}$e$}^{0}_{s}=-(\mbox{\mathversion{bold}$v$}_{0s}
\cdot \mbox{\mathversion{bold}$e$}^{0})\mbox{\mathversion{bold}$v$}_{0}, 
& s\in I_{L}, \\[3mm]
\mbox{\mathversion{bold}$e$}^{0}(0)=-\mbox{\mathversion{bold}$e$}_{2}, & \ 
\end{array}\right.
\end{align*}
and set \( \mbox{\mathversion{bold}$w$}^{0} = \mbox{\mathversion{bold}$e$}^{0} \times \mbox{\mathversion{bold}$v$}_{0} \).
Similarly to Section \ref{th1}, the triplet
\( \{ \mbox{\mathversion{bold}$v$}_{0}, \mbox{\mathversion{bold}$e$}^{0},\mbox{\mathversion{bold}$w$}^{0}\} \) is an orthonormal basis of 
\( \mathbf{R}^{3}\) and the decomposition 
\begin{align*}
\mbox{\mathversion{bold}$v$}_{0s} = 
q^{0}_{1}\mbox{\mathversion{bold}$e$}^{0} + 
q^{0}_{2}\mbox{\mathversion{bold}$w$}^{0}
\end{align*}
holds and direct calculation yields
\begin{align*}
\mbox{\mathversion{bold}$v$}_{0}\times \mbox{\mathversion{bold}$v$}_{0ss}
=
q^{0}_{2s}\mbox{\mathversion{bold}$e$}^{0} 
- q^{0}_{1s} \mbox{\mathversion{bold}$w$}^{0}.
\end{align*}
Hence, setting \( q_{0}=q^{0}_{1} + {\rm i}q^{0}_{2} \)
we see that \( q_{0}\) satisfies \( q_{0s}(0,t)=q_{0s}(L,t)=0\) 
since \( \mbox{\mathversion{bold}$v$}_{0} \) and 
\( \mbox{\mathversion{bold}$b$}\) satisfy the \( 1\)-st 
order compatibility condition for (\ref{slant}).
In other words, \( q_{0}\) satisfies the \( 0\)-th order 
compatibility condition for (\ref{plane}).

Let \( q\in C\big( [0,\infty);H^{2}(I_{L})\big) \cap C^{1}\big( [0,\infty);L^{2}(I_{L})\big) \) be the solution of 
problem (\ref{plane}) with initial datum \( q_{0}\) just obtained
and set \( q(s,t)=q_{1}(s,t) + {\rm i}q_{2}(s,t) \).
We define \( \mbox{\mathversion{bold}$v$}\), \( \mbox{\mathversion{bold}$e$} \),
and \( \mbox{\mathversion{bold}$w$} \) as the solution of the following 
system of ordinary differential equations with respect to \( s\).
\begin{align}
\left\{
\begin{array}{ll}
\mbox{\mathversion{bold}$v$}_{s}=q_{1}\mbox{\mathversion{bold}$e$}
+ q_{2}\mbox{\mathversion{bold}$w$}, & s\in I_{L}, \\[3mm]
\mbox{\mathversion{bold}$e$}_{s}=-q_{1}\mbox{\mathversion{bold}$v$}, & 
s\in I_{L}, \\[3mm]
\mbox{\mathversion{bold}$w$}_{s} = -q_{2}\mbox{\mathversion{bold}$v$}, &
s\in I_{L}, \\[3mm]
(\mbox{\mathversion{bold}$v$},\mbox{\mathversion{bold}$e$},
\mbox{\mathversion{bold}$w$})(0)=
(\mbox{\mathversion{bold}$e$}_{1},-\mbox{\mathversion{bold}$e$}_{2},
\mbox{\mathversion{bold}$e$}_{3}). & \ 
\end{array}\right.
\label{vss}
\end{align}
Again, \( \{ \mbox{\mathversion{bold}$v$},\mbox{\mathversion{bold}$e$},\mbox{\mathversion{bold}$w$} \} \) depends on \( t>0\) as a parameter
and \( \{ \mbox{\mathversion{bold}$v$}, \mbox{\mathversion{bold}$e$},\mbox{\mathversion{bold}$w$}\} \) is an orthonormal basis of 
\( \mathbf{R}^{3}\) for all \( s\in I_{L}\) and \( t>0\).
From (\ref{vss}), we deduce that
\( \mbox{\mathversion{bold}$v$}\in C\big([0,\infty);H^{3}(I_{L})\big)\cap C^{1}\big([0,\infty);H^{1}(I_{L})\big) \) and
\begin{align*}
\| \mbox{\mathversion{bold}$v$}_{s}(t)\|^{2} 
&= \| q_{1}(t) \|^{2} + \|q_{2}(t)\|^{2}, \\[3mm]
\| \mbox{\mathversion{bold}$v$}_{ss}(t)\|
&\leq
C\big( 
\|q_{1s}(t)\|+ \| q_{2s}(t)\| 
+ \|q_{1}(t)\|^{2}_{1}+\|q_{2}(t)\|^{2}_{1}\big), \\[3mm]
\| \mbox{\mathversion{bold}$v$}_{sss}(t)\|
&\leq
C\big( 
\|q_{1ss}(t)\| + \| q_{2ss}(t)\| 
+ \|q_{1}(t)\|^{3}_{2}+\|q_{2}(t)\|^{3}_{2}\big),
\end{align*}
where \( C>0\) is independent of \( t\) and \( q\).
Hence we have
\begin{align*}
\lvert \mbox{\mathversion{bold}$v$}(s,t)\rvert &=1 \ \text{for all} \ s\in I_{L} \
\text{and} \ t\geq 0, \\[3mm]
\|\mbox{\mathversion{bold}$v$}_{s}(t)\|_{2}
&\leq
C\big(
\|q(t)\|_{2} + \| q(t)\|^{3}_{2}
\big).
\end{align*}
Note that the definition of \( \mbox{\mathversion{bold}$v$}_{0}\),
\( \mbox{\mathversion{bold}$e$}^{0}\), and \(\mbox{\mathversion{bold}$w$}^{0}\)
coincides with the limit as \( t\) tends to zero in equations (\ref{vss}).

Next, we see that
\begin{align}
\mbox{\mathversion{bold}$v$}_{t}&=q_{2s}\mbox{\mathversion{bold}$e$}
-q_{1s}\mbox{\mathversion{bold}$w$}, \label{vtt} \\[3mm]
\mbox{\mathversion{bold}$e$}_{t}&=-q_{2s}\mbox{\mathversion{bold}$v$}
+(\frac{1}{2}\lvert q\rvert^{2}-\frac{1}{2}\lvert q(0,t)\rvert^{2})\mbox{\mathversion{bold}$w$},  
\nonumber \\[3mm]
\mbox{\mathversion{bold}$w$}_{t}&=q_{1s}\mbox{\mathversion{bold}$v$}
-(\frac{1}{2}\lvert q\rvert^{2}-\frac{1}{2}\lvert q(0,t)\rvert^{2})\mbox{\mathversion{bold}$e$}.
\nonumber
\end{align}
From equations (\ref{vss}), we see that
\begin{align*}
\mbox{\mathversion{bold}$v$}\times \mbox{\mathversion{bold}$v$}_{ss}
=
q_{2s}\mbox{\mathversion{bold}$e$}-q_{1s}\mbox{\mathversion{bold}$w$},
\end{align*}
and hence, along with (\ref{vtt}) we have
\begin{align*}
\mbox{\mathversion{bold}$v$}_{t}=\mbox{\mathversion{bold}$v$}\times 
\mbox{\mathversion{bold}$v$}_{ss}.
\end{align*}
The first equation in (\ref{vss}) with \( t=0\) together with 
the uniqueness of the solution of ordinary differential equations
show that
\begin{align*}
\mbox{\mathversion{bold}$v$}(s,0) = \mbox{\mathversion{bold}$v$}_{0}(s).
\end{align*}
Finally, taking the trace at \( s=0\) and \( s=L\) in equation (\ref{vtt})
we see that
\( \mbox{\mathversion{bold}$v$}_{t}(0,t)=\mbox{\mathversion{bold}$v$}_{t}(L,t)= \mbox{\mathversion{bold}$0$}\) which in turn yields
\begin{align*}
\mbox{\mathversion{bold}$v$}(0,t) &= \mbox{\mathversion{bold}$v$}_{0}(0)=\mbox{\mathversion{bold}$e$}_{1}, \\[3mm]
\mbox{\mathversion{bold}$v$}(L,t) &= \mbox{\mathversion{bold}$v$}_{0}(L)=\mbox{\mathversion{bold}$b$},
\end{align*}
and hence, \( \mbox{\mathversion{bold}$v$}\) satisfies the 
boundary condition of problem (\ref{slant}). 
This shows that \( \mbox{\mathversion{bold}$v$}\) is the desired solution of
problem (\ref{slant}). 
This finishes the proof of Theorem \ref{hasi2}. \hfill \( \Box\)


\subsection{Proof of Theorem \ref{stability}}
We divide the proof of Theorem \ref{stability} into five steps.

We first establish that perturbations to the initial datum of 
problem (\ref{plane}) correspond to perturbations to the initial datum
of problem (\ref{slant}). Furthermore, we derive estimates that
show in what norms the perturbations translate.

Secondly, we show that \( q_{R}(0) \) corresponds to a particular type of 
initial datum of problem (\ref{slant}). Solutions of problem (\ref{slant})
with this type of initial datum will be referred to as an
arc-shaped solution of problem (\ref{slant}).

Then, we prove stability estimates for 
arc-shaped solutions of problem (\ref{slant}),
which is related to the stability problem for plane wave solutions of
problem (\ref{plane}). The stability estimates for arc-shaped solutions
are already utilized by the author in \cite{27}, but we reiterate it here for completeness.

Finally, we combine the results of the previous steps to prove that 
the plane wave solution \( q_{R}\) is orbitally stable. More precisely, 
we show that the stability estimates for problem (\ref{slant}) 
yield stability estimates for problem (\ref{plane}) through the 
generalized Hasimoto transformation. 
%


\subsubsection{The Generalized Hasimoto Transformation of Perturbations}
\label{inverse hasimoto}

In this section, we show that perturbations to the initial datum 
of problem (\ref{plane}) translate to perturbations
to the initial datum of problem (\ref{slant}) through the
generalized Hasimoto transformation.

Let \( q_{0}\in H^{2}(I_L) \) and \( \varphi_{0}\in H^{2}(I_L) \) be 
such that \( q_{0} \) and \( \varphi_{0}\) satisfy the 
\( 0 \)-th order compatibility condition for problem (\ref{plane}).
Note that \( q_{0}+\varphi_{0}\) also satisfy the same compatibility condition.
Setting \( \tilde{q}_{0} = q_{0}+\varphi_{0}\) and 
\begin{align*}
q_{0}=q^{0}_{1}+{\rm i}q^{0}_{2} \qquad \text{and} \qquad
\tilde{q}_{0}=\tilde{q}^{0}_{1}+{\rm i}\tilde{q}^{0}_{2},
\end{align*}
define \( \{\mbox{\mathversion{bold}$v$}_{0},\mbox{\mathversion{bold}$e$}^{0},\mbox{\mathversion{bold}$w$}^{0} \}\) and
\( \{ \tilde{\mbox{\mathversion{bold}$v$}}_{0},\tilde{\mbox{\mathversion{bold}$e$}}^{0},\tilde{\mbox{\mathversion{bold}$w$}}^{0} \} \)
as the solution of
\begin{align*}
\left\{
\begin{array}{ll}
\mbox{\mathversion{bold}$v$}_{0s} = q^{0}_{1}\mbox{\mathversion{bold}$e$}^{0}
+ q^{0}_{2}\mbox{\mathversion{bold}$w$}^{0}, & s\in I_{L}, \\[3mm]
\mbox{\mathversion{bold}$e$}^{0}_{s} 
= -q^{0}_{1}\mbox{\mathversion{bold}$v$}_{0}, & s\in I_{L}, \\[3mm]
\mbox{\mathversion{bold}$w$}^{0}_{s}
= -q^{0}_{2}\mbox{\mathversion{bold}$v$}_{0}, & s\in I_{L}, \\[3mm]
(\mbox{\mathversion{bold}$v$}_{0},\mbox{\mathversion{bold}$e$}^{0},
\mbox{\mathversion{bold}$w$}^{0})
=
(\mbox{\mathversion{bold}$e$}_{1},
-\mbox{\mathversion{bold}$e$}_{2},
\mbox{\mathversion{bold}$e$}_{3}), & \ 
\end{array}\right.
\\[5mm]
\left\{
\begin{array}{ll}
\tilde{\mbox{\mathversion{bold}$v$}}_{0s} 
= q^{0}_{1}\tilde{\mbox{\mathversion{bold}$e$}}^{0}
+ q^{0}_{2}\tilde{\mbox{\mathversion{bold}$w$}}^{0}, & s\in I_{L}, \\[3mm]
\tilde{\mbox{\mathversion{bold}$e$}}^{0}_{s} 
= -q^{0}_{1}\tilde{\mbox{\mathversion{bold}$v$}}_{0}, & s\in I_{L}, \\[3mm]
\tilde{\mbox{\mathversion{bold}$w$}}^{0}_{s}
= -q^{0}_{2}\tilde{\mbox{\mathversion{bold}$v$}}_{0}, & s\in I_{L}, \\[3mm]
(\tilde{\mbox{\mathversion{bold}$v$}}_{0},
\tilde{\mbox{\mathversion{bold}$e$}}^{0},
\tilde{\mbox{\mathversion{bold}$w$}}^{0})
=
(\mbox{\mathversion{bold}$e$}_{1},
-\mbox{\mathversion{bold}$e$}_{2},
\mbox{\mathversion{bold}$e$}_{3}), & \ 
\end{array}\right.
\end{align*}
respectively. Setting
\( \mbox{\mathversion{bold}$V$}:= \mbox{\mathversion{bold}$v$}_{0} - \tilde{\mbox{\mathversion{bold}$v$}}_{0}, \ \mbox{\mathversion{bold}$E$}:= \mbox{\mathversion{bold}$e$}^{0}-\tilde{\mbox{\mathversion{bold}$e$}}^{0},\) and \(\mbox{\mathversion{bold}$W$}:=\mbox{\mathversion{bold}$w$}^{0}-\tilde{\mbox{\mathversion{bold}$w$}}^{0} \),
we see that
\begin{align}
\left\{
\begin{array}{ll}
\mbox{\mathversion{bold}$V$}_{s}
=
(q^{0}_{1}-\tilde{q}^{0}_{1})\mbox{\mathversion{bold}$e$}^{0}
+
\tilde{q}^{0}_{1}\mbox{\mathversion{bold}$E$}
+
(q^{0}_{2}-\tilde{q}^{0}_{2})\mbox{\mathversion{bold}$w$}^{0}
+
\tilde{q}^{0}_{2}\mbox{\mathversion{bold}$W$}, & s\in I_{L}, \\[3mm]
\mbox{\mathversion{bold}$E$}_{s}
=
(q^{0}_{1}-\tilde{q}^{0}_{1})\mbox{\mathversion{bold}$v$}_{0}
-
\tilde{q}^{0}_{1}\mbox{\mathversion{bold}$V$}, & s\in I_{L}, \\[3mm]
\mbox{\mathversion{bold}$W$}_{s}
=
(\tilde{q}^{0}_{2}-q^{0}_{2})\mbox{\mathversion{bold}$v$}_{0}
-
\tilde{q}^{0}_{2}\mbox{\mathversion{bold}$V$}, & s\in I_{L}, \\[3mm]
(\mbox{\mathversion{bold}$V$},\mbox{\mathversion{bold}$E$},
\mbox{\mathversion{bold}$W$})(0)
=
(\mbox{\mathversion{bold}$0$},\mbox{\mathversion{bold}$0$},
\mbox{\mathversion{bold}$0$}),
\end{array}\right.
\label{VEW}
\end{align}
holds. Standard energy estimates yield
\begin{align*}
\frac{1}{2}\frac{{\rm d}}{{\rm d}s}
\bigg\{
\lvert\mbox{\mathversion{bold}$V$}\rvert^{2}&+\lvert\mbox{\mathversion{bold}$E$}\rvert^{2}
+ \lvert \mbox{\mathversion{bold}$W$}\rvert ^{2}
\bigg\} \\[3mm]
&\leq
C\big(1+
\| \tilde{q}^{0}_{1}\|^{2}_{L^{\infty}(I_{L})}
+\| \tilde{q}^{0}_{2}\|^{2}_{L^{\infty}(I_{L})}
\big)
\big( \lvert\mbox{\mathversion{bold}$V$}\rvert^{2}+\lvert\mbox{\mathversion{bold}$E$}\rvert^{2}
+ \lvert \mbox{\mathversion{bold}$W$}\rvert ^{2}\big) \\[3mm]
& \qquad +
\big( 
\| \tilde{q}^{0}_{1}-q^{0}_{1}\|^{2}_{L^{\infty}(I_{L})}
+\| \tilde{q}^{0}_{2}-q^{0}_{2}\|^{2}_{L^{\infty}(I_{L})}
\big),
\end{align*}
and from Gronwall's inequality, we have
\begin{align}
\lvert\mbox{\mathversion{bold}$V$}\rvert^{2}
+\lvert\mbox{\mathversion{bold}$E$}\rvert^{2}
+ \lvert\mbox{\mathversion{bold}$W$}\rvert^{2}
\leq
Ce^{CL}
\bigg\{
\| \tilde{q}^{0}_{1}-q^{0}_{1}\|^{2}_{L^{\infty}(I_{L})}
+\| \tilde{q}^{0}_{2}-q^{0}_{2}\|^{2}_{L^{\infty}(I_{L})}
\bigg\},
\label{baseest}
\end{align}
where \( C>0\) depends on \( \| q_{0}\|_{L^{\infty}(I_{L})}\) 
and \( \| \tilde{q}_{0}\|_{L^{\infty}(I_{L})} \).
Hence, estimate (\ref{baseest}) implies that
if \( \tilde{q}_{0}-q_{0}\) is controlled in \( L^{\infty}(I_{L}) \),
then we have a control on
\( \mbox{\mathversion{bold}$V$}, \mbox{\mathversion{bold}$E$},\) and 
\( \mbox{\mathversion{bold}$W$}\)
in \( L^{\infty}(I_{L}) \), and as a consequence, in \( L^{2}(I_{L}) \).
Equations (\ref{VEW}) together with inequality 
(\ref{baseest}) show that
\begin{align*}
&\|\mbox{\mathversion{bold}$V$}_{s}\|^{2}
+\|\mbox{\mathversion{bold}$E$}_{s}\|^{2}
+ \|\mbox{\mathversion{bold}$W$}_{s}\|^{2}
\leq
C\big(
\| \tilde{q}^{0}_{1}-q^{0}_{1}\|^{2}_{L^{\infty}(I_{L})}
+\| \tilde{q}^{0}_{2}-q^{0}_{2}\|^{2}_{L^{\infty}(I_{L})}
\big), \\[3mm]
&\|\mbox{\mathversion{bold}$V$}_{s}\|_{L^{\infty}(I_{L})}
+\|\mbox{\mathversion{bold}$E$}_{s}\|_{L^{\infty}(I_{L})}
+ \|\mbox{\mathversion{bold}$W$}_{s}\|_{L^{\infty}(I_{L})}
\\[3mm] 
& \hspace*{40mm}\leq
C\big(
\| \tilde{q}^{0}_{1}-q^{0}_{1}\|_{L^{\infty}(I_{L})}
+\| \tilde{q}^{0}_{2}-q^{0}_{2}\|_{L^{\infty}(I_{L})}\big),
\end{align*}
hold, where \( C>0\) depends on \( \| q_{0}\|_{L^{\infty}(I_{L})}\) 
and \( \| \tilde{q}_{0}\|_{L^{\infty}(I_{L})} \).

Taking the derivative with respect to \( s\) of equations
(\ref{VEW}), we have
\begin{align*}
\mbox{\mathversion{bold}$V$}_{ss}&=
(q^{0}_{1}-\tilde{q}^{0}_{1})_{s}\mbox{\mathversion{bold}$e$}^{0}
-q^{0}_{1}(q^{0}_{1}-\tilde{q}^{0}_{1})\mbox{\mathversion{bold}$v$}_{0}
+\tilde{q}^{0}_{1s}\mbox{\mathversion{bold}$E$}
+
\tilde{q}^{0}_{1}
\big(
(q^{0}_{1}-\tilde{q}^{0}_{1})\mbox{\mathversion{bold}$v$}_{0}
-
\tilde{q}^{0}_{1}\mbox{\mathversion{bold}$V$}
\big)\\[3mm]
& \qquad +
(q^{0}_{2}-\tilde{q}^{0}_{2})_{s}\mbox{\mathversion{bold}$w$}^{0}
-q^{0}_{2}(q^{0}_{2}-\tilde{q}^{0}_{2})\mbox{\mathversion{bold}$v$}_{0}
+
\tilde{q}^{0}_{2s}\mbox{\mathversion{bold}$W$}
+
\tilde{q}^{0}_{2}\big( (\tilde{q}^{0}_{2}-q^{0}_{2})\mbox{\mathversion{bold}$v$}_{0}
-(\tilde{q}^{0}_{2})\mbox{\mathversion{bold}$V$}
\big) \\[3mm]
&=
-\big(
(q^{0}_{1}-\tilde{q}^{0}_{1})^{2}
+(q^{0}_{2}-\tilde{q}^{0}_{2})^{2}\big) \mbox{\mathversion{bold}$v$}_{0}
+
(q^{0}_{1}-\tilde{q}^{0}_{1})_{s}\mbox{\mathversion{bold}$e$}^{0}
+(q^{0}_{2}-\tilde{q}^{0}_{2})_{s}\mbox{\mathversion{bold}$w$}^{0} \\[3mm]
& \qquad +
\tilde{q}^{0}_{1s}\mbox{\mathversion{bold}$E$}
+ \tilde{q}^{0}_{2s}\mbox{\mathversion{bold}$W$}
-
\big( (\tilde{q}^{0}_{1})^{2}+ (\tilde{q}^{0}_{2})^{2}\big)
\mbox{\mathversion{bold}$V$},
\end{align*}
from which we have the estimate
\begin{align*}
\| \mbox{\mathversion{bold}$V$}_{ss} \|
\leq
C\big(
\| \tilde{q}^{0}_{1}-q^{0}_{1}\|_{1}+ \| \tilde{q}^{0}_{2}-q^{0}_{2}\|_{1}
+
\| \tilde{q}^{0}_{1}-q^{0}_{1}\|^{2}_{1}+ \| \tilde{q}^{0}_{2}-q^{0}_{2}\|^{2}_{1}
\big).
\end{align*}
In a similar fashion, we can derive the estimate
\begin{align*}
\| \mbox{\mathversion{bold}$V$}_{sss} \|
\leq
C\big(
\| \tilde{q}^{0}_{1}-q^{0}_{1}\|_{2}+ \| \tilde{q}^{0}_{2}-q^{0}_{2}\|_{2}
+
\| \tilde{q}^{0}_{1}-q^{0}_{1}\|^{3}_{2}+ \| \tilde{q}^{0}_{2}-q^{0}_{2}\|^{3}_{2}
\big).
\end{align*}
In summary, we have the estimate
\begin{align}
\| \tilde{\mbox{\mathversion{bold}$v$}_{0}}-\mbox{\mathversion{bold}$v$}_{0}\|_{3}
\leq
C\big(
\| \tilde{q}_{0}-q_{0}\|_{2}+ \| \tilde{q}_{0}-q_{0}\|^{3}_{2} 
\big),
\label{pertest}
\end{align}
where \( C>0\) depends on \( \| \tilde{q}_{0}\|_{2}+\|q_{0}\|_{2}\).
We also see that \( C>0\) is non-decreasing with respect to
\( \| \tilde{q}_{0}\|_{2}+\|q_{0}\|_{2}\).
Inequality (\ref{pertest}) shows that \( H^{2}\)-perturbations of \( q_{0} \)
correspond to \( H^{3}\)-perturbations of \( \mbox{\mathversion{bold}$v$}_{0} \).

We summarize the conclusion of this section 
in the following proposition.
\begin{pr}
For \( q_{0}\in H^{2}(I_L) \) and \( \varphi_{0}\in H^{2}(I_L) \) which
satisfy the \( 0 \)-th order compatibility condition for {\rm (\ref{plane})},
let \( \mbox{\mathversion{bold}$v$}_{0}\) and \( \tilde{\mbox{\mathversion{bold}$v$}}_{0}\)
be initial data constructed by the generalized Hasimoto transformation
from \( q_{0}\) and \( q_{0}+\varphi_{0}\), respectively.
Then, \(\mbox{\mathversion{bold}$v$}_{0},\tilde{\mbox{\mathversion{bold}$v$}}_{0}\in H^{3}(I_{L})\) and satsify
\begin{align*}
\| \tilde{\mbox{\mathversion{bold}$v$}_{0}}-\mbox{\mathversion{bold}$v$}_{0}\|_{3}
\leq
C\big(
\| \tilde{q}_{0}-q_{0}\|_{2}+ \| \tilde{q}_{0}-q_{0}\|^{3}_{2} 
\big),
\end{align*}
where \( C>0\) depends on \( \|q_{0}\|_{2}+\|\varphi_{0}\|_{2}\),
and is non-decreasing with respect to \( \|q_{0}\|_{2}+\|\varphi_{0}\|_{2}\).

\end{pr}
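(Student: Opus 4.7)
The plan is to carry out a direct energy estimate on the linear ODE system satisfied by the differences of the two orthonormal frames constructed from $q_{0}$ and $\tilde{q}_{0}=q_{0}+\varphi_{0}$, bootstrapping from an $L^{\infty}(I_{L})$ control of the differences themselves up to $L^{2}(I_{L})$ control of their second and third $s$-derivatives. Since $q_{0}, \tilde{q}_{0} \in H^{2}(I_{L})$, the defining ODE systems have sufficiently regular coefficients that $\mbox{\mathversion{bold}$v$}_{0}, \tilde{\mbox{\mathversion{bold}$v$}}_{0} \in H^{3}(I_{L})$ follows at once by differentiating the systems twice in $s$ and using that the respective frames are orthonormal, hence pointwise bounded by $1$.

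The first main step is to subtract the two defining systems to obtain a linear inhomogeneous ODE in $(\mbox{\mathversion{bold}$V$}, \mbox{\mathversion{bold}$E$}, \mbox{\mathversion{bold}$W$})$ with vanishing initial datum at $s=0$, whose forcing is linear in $\tilde{q}_{0}-q_{0}$ and whose coefficient matrix involves the components of $\tilde{q}_{0}$. Taking the dot product of each equation with its respective unknown, I would observe that the terms linear in $\tilde{q}^{0}_{j}$ appear with opposing signs and cancel due to the skew-symmetric structure of the system, leaving a differential inequality in $s$ with a Gronwall-type linear term in $\lvert\mbox{\mathversion{bold}$V$}\rvert^{2}+\lvert\mbox{\mathversion{bold}$E$}\rvert^{2}+\lvert\mbox{\mathversion{bold}$W$}\rvert^{2}$ and a forcing controlled by $\|\tilde{q}_{0}-q_{0}\|_{L^{\infty}(I_{L})}^{2}$. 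Integrating over $[0,s]$, applying Gronwall's lemma on $I_{L}$, and using the Sobolev embedding $H^{1}(I_{L})\hookrightarrow L^{\infty}(I_{L})$ yields an $L^{\infty}(I_{L})$ bound on the triple of differences that is linear in $\|\tilde{q}_{0}-q_{0}\|_{2}$, with constant depending monotonically on $\|q_{0}\|_{2}+\|\varphi_{0}\|_{2}$ through the $L^{\infty}$ bounds on $q_{0}$ and $\tilde{q}_{0}$.

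With this base bound in hand, I would recurse upward by differentiating the difference system in $s$ and algebraically solving for $\mbox{\mathversion{bold}$V$}_{s}$, $\mbox{\mathversion{bold}$V$}_{ss}$, and $\mbox{\mathversion{bold}$V$}_{sss}$ in terms of previously controlled quantities. At each level, new contributions appear both from derivatives of the coefficients $\tilde{q}^{0}_{j}$ (which are controlled by $\|\tilde{q}_{0}\|_{2}$ via $H^{1}\hookrightarrow L^{\infty}$) multiplying lower-order differences, and from higher-derivative differences $(\tilde{q}^{0}_{j}-q^{0}_{j})_{s}$ and $(\tilde{q}^{0}_{j}-q^{0}_{j})_{ss}$. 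Combining these with the already-established $L^{\infty}$ (and hence $L^{2}$) estimates and using the Sobolev embedding to dominate products yields $\|\mbox{\mathversion{bold}$V$}_{ss}\|$ by a quadratic expression and $\|\mbox{\mathversion{bold}$V$}_{sss}\|$ by a cubic expression in $\|\tilde{q}_{0}-q_{0}\|_{2}$.

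The main technical obstacle is bookkeeping: ensuring that the constant $C$ in the final inequality depends only on $\|q_{0}\|_{2}+\|\varphi_{0}\|_{2}$ in a monotone non-decreasing fashion. This is handled by never invoking anything finer than the embedding $H^{2}(I_{L})\hookrightarrow H^{1}(I_{L})\hookrightarrow L^{\infty}(I_{L})$ on $q_{0}$, $\tilde{q}_{0}$, and their derivatives, so that the Gronwall factor $e^{CL}$ and all coefficients appearing through the bootstrap carry the same structural dependence. Summing the three orders of derivative estimates then produces the desired bound $\|\tilde{\mbox{\mathversion{bold}$v$}}_{0}-\mbox{\mathversion{bold}$v$}_{0}\|_{3} \leq C(\|\tilde{q}_{0}-q_{0}\|_{2}+\|\tilde{q}_{0}-q_{0}\|_{2}^{3})$, completing the plan.
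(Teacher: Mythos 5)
Your proposal is correct and is essentially the paper's proof: subtract the two frame ODE systems to get the difference system in \( (\mbox{\mathversion{bold}$V$},\mbox{\mathversion{bold}$E$},\mbox{\mathversion{bold}$W$}) \) with zero data at \( s=0 \), run a Gronwall energy estimate in \( s \) on \( \lvert\mbox{\mathversion{bold}$V$}\rvert^{2}+\lvert\mbox{\mathversion{bold}$E$}\rvert^{2}+\lvert\mbox{\mathversion{bold}$W$}\rvert^{2} \) with forcing controlled through \( H^{1}(I_{L})\hookrightarrow L^{\infty}(I_{L}) \), and then bound \( \mbox{\mathversion{bold}$V$}_{s} \), \( \mbox{\mathversion{bold}$V$}_{ss} \), \( \mbox{\mathversion{bold}$V$}_{sss} \) by differentiating the system and reusing the base estimate, exactly as the paper does. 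Your observation that the \( \tilde{q}^{0}_{j} \)-linear cross terms cancel by skew-symmetry is a mild sharpening (the paper instead retains the factor \( 1+\|\tilde{q}^{0}_{1}\|^{2}_{L^{\infty}(I_{L})}+\|\tilde{q}^{0}_{2}\|^{2}_{L^{\infty}(I_{L})} \) in the Gronwall exponent), but it does not change the structure or conclusion of the argument.
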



\subsubsection{The Inverse Generalized
Hasimoto Transformation of the Plane Wave Solution}
\label{hasiplane}
We investigate the image of the plane wave solution by the inverse
generalized Hasimoto transformation. 
Recall that for \( R>0\), 
\begin{align*}
q_{R}(t)=-\frac{1}{R}\exp\left\{ -\frac{{\rm i}t}{2R^{2}} \right\},
\end{align*}
and \( q_{R}(0) = -\frac{1}{R} \). Following the arguments 
in Section \ref{th1}, the generalized Hasimoto transformation
applied to the initial datum \( -\frac{1}{R} \) corresponds to
defining \( \mbox{\mathversion{bold}$v$}_{0}\) and
\( \mbox{\mathversion{bold}$e$}^{0} \) as the solution
to
\begin{align}
\label{arc}
\left\{
\begin{array}{ll}
\mbox{\mathversion{bold}$v$}_{0s} 
= -\frac{1}{R}\mbox{\mathversion{bold}$e$}^{0}, &  s\in I_{L}, \\[3mm]
\mbox{\mathversion{bold}$e$}^{0}_{s}
= \frac{1}{R}\mbox{\mathversion{bold}$v$}_{0}, & s\in I_{L}, \\[3mm]
(\mbox{\mathversion{bold}$v$}_{0},\mbox{\mathversion{bold}$e$}^{0})(0)
=
(\mbox{\mathversion{bold}$e$}_{1},-\mbox{\mathversion{bold}$e$}_{2}), & \ 
\end{array}\right.
\end{align}
and setting \( \mbox{\mathversion{bold}$w$}^{0} \equiv \mbox{\mathversion{bold}$e$}_{3} \).
System (\ref{arc}) can be solved explicitly to obtain
\begin{align*}
\mbox{\mathversion{bold}$v$}_{0}(s) 
= {}^{t}\big( \cos(s/R), -\sin(s/R), 0 \big).
\end{align*}
The above
\( \mbox{\mathversion{bold}$v$}_{0}\) corresponds to an arc-shaped 
filament, which is an explicit solution of problem (\ref{slant}). 
Namely, 
\begin{align*}
\mbox{\mathversion{bold}$v$}_{0}\times 
\mbox{\mathversion{bold}$v$}_{0ss} \equiv \mbox{\mathversion{bold}$0$}
\end{align*}
and hence, \( \mbox{\mathversion{bold}$v$}_{0}\) is a
stationary solution of problem (\ref{slant})
with \( \mbox{\mathversion{bold}$b$}={}^{t}(\cos (L/R), -\sin(L/R),0) \). 
We set 
\begin{align*}
\mbox{\mathversion{bold}$v$}^{R}(s,t) = \mbox{\mathversion{bold}$v$}^{R}(s)
:= {}^{t}\big( \cos(s/R), -\sin(s/R), 0 \big) 
\end{align*}
for 
\( s\in I_{L}\) and \( t>0 \) and refer to 
\( \mbox{\mathversion{bold}$v$}^{R}\) as the arc-shaped solution 
of problem (\ref{slant}). 
This shows that \( q_{R}\) corresponds to \( \mbox{\mathversion{bold}$v$}^{R}\) 
through the inverse generalized Hasimoto transformation.

\subsubsection{Stability of Arc-Shaped Solutions of Problem (\ref{slant})}
We derive stability estimates for arc-shaped solutions of problem (\ref{slant}).
These estimates are already derived by the author in \cite{27}, but we reiterate the 
statements and proofs for completeness.

For \( R>0\), \( \mbox{\mathversion{bold}$v$}^{R}\) is the solution of 
problem (\ref{slant}) with 
\begin{align*}
\mbox{\mathversion{bold}$v$}_{0}(s)
=
\mbox{\mathversion{bold}$v$}^{R}(s)
=
{}^{t}(\cos(s/R),-\sin (s/R), 0), \\[3mm]
\mbox{\mathversion{bold}$b$}={}^{t}(\cos(L/R),-\sin (L,/R), 0).
\end{align*}
Recall that \( \mbox{\mathversion{bold}$v$}^{R}\) is a stationary solution.
We consider perturbations of the arc-shaped solution 
\( \mbox{\mathversion{bold}$v$}^{R}\).
Let, \( \mbox{\mathversion{bold}$\varphi$}_{0}\in H^{3}(I_{L})\) 
be the initial perturbation satisfying the following.
\begin{as}
For the initial perturbation \( \mbox{\mathversion{bold}$\varphi$}_{0}\), 
we assume the following.
\begin{description}
\item[(A1)] \ \( \lvert \mbox{\mathversion{bold}$v$}_{0}(s)+\mbox{\mathversion{bold}$\varphi$}_{0}(s)\rvert =1 \)
for all \( s\in I_{L} \).
\item[(A2)] \ \( \mbox{\mathversion{bold}$v$}_{0}+\mbox{\mathversion{bold}$\varphi$}_{0}\)
and \( \mbox{\mathversion{bold}$b$}\) satisfy the compatibility conditions 
for problem {\rm (\ref{slant})} up to order
\( 1\).
\end{description}
\label{assump}
\end{as}
\noindent
Note that because \( \mbox{\mathversion{bold}$v$}_{0} \) and 
\( \mbox{\mathversion{bold}$b$} \)
satisfy the \( 0\)-th order compatibility condition, 
assumption (A2) implies \( \mbox{\mathversion{bold}$\varphi$}_{0}\rvert _{s=0}=\mbox{\mathversion{bold}$\varphi$}_{0}\rvert _{s=L}=\mbox{\mathversion{bold}$0$} \).
Let \( \mbox{\mathversion{bold}$v$}\in C\big( [0,\infty);H^{3}(I_{L})\big) \cap C^{1}\big( [0,\infty);H^{1}(I_{L})\big) \) be the solution of problem (\ref{slant}) with 
initial datum \( \mbox{\mathversion{bold}$v$}_{0}+\mbox{\mathversion{bold}$\varphi$}_{0}\)
and boundary datum \( \mbox{\mathversion{bold}$b$}\). For convenience, 
we refer to this solution \( \mbox{\mathversion{bold}$v$}\) as the perturbed
arc-shaped solution. Set
\( \mbox{\mathversion{bold}$\varphi$}=\mbox{\mathversion{bold}$v$}-\mbox{\mathversion{bold}$v$}^{R}\).
Then, \( \mbox{\mathversion{bold}$\varphi$}\) satisfies
\begin{align}
\left\{
\begin{array}{ll}
\mbox{\mathversion{bold}$\varphi$}_{t}=
\mbox{\mathversion{bold}$\varphi$}\times \mbox{\mathversion{bold}$\varphi$}_{ss}
+
\mbox{\mathversion{bold}$\varphi$}\times \mbox{\mathversion{bold}$v$}^{R}_{ss}
+
\mbox{\mathversion{bold}$v$}^{R}\times \mbox{\mathversion{bold}$\varphi$}_{ss}, &
s\in I_{L}, t>0, \\[3mm]
\mbox{\mathversion{bold}$\varphi$}(s,0) = \mbox{\mathversion{bold}$\varphi$}_{0}(s), &
s\in I_{L}, \\[3mm]
\mbox{\mathversion{bold}$\varphi$}(0,t)=\mbox{\mathversion{bold}$\varphi$}(L,t)=
\mbox{\mathversion{bold}$0$}. & \ 
\end{array}\right.
\label{perteq}
\end{align}
Note that
\( \mbox{\mathversion{bold}$\varphi$}\in C\big( [0,\infty);H^{3}(I_{L})\big) \cap C^{1}\big( [0,\infty);H^{1}(I_{L})\big) \).
We first prove the following lemma.
\begin{lm}
For \( \mbox{\mathversion{bold}$\varphi$}\in C\big( [0,\infty);H^{3}(I_{L})\big) \cap C^{1}\big( [0,\infty);H^{1}(I_{L})\big) \), set
\begin{align*}
E(\mbox{\mathversion{bold}$\varphi$}(t)):=
\| \mbox{\mathversion{bold}$\varphi$}_{s}(t)\|^{2}
-\frac{1}{R^{2}}\|\mbox{\mathversion{bold}$\varphi$}(t)\|^{2}.
\end{align*}
Then, if \( \mbox{\mathversion{bold}$\varphi$} \) satisfies 
{\rm (\ref{perteq})}, 
\( E(\mbox{\mathversion{bold}$\varphi$}(t)) = E(\mbox{\mathversion{bold}$\varphi$}_{0})\) for all \( t>0 \).
In other words, \( E\) is a conserved quantity. 
\label{conserved}
\end{lm}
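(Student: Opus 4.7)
The plan is to show $\frac{d}{dt}E(\boldsymbol{\varphi}(t)) = 0$ by direct differentiation, integration by parts, and the triple-product identities $\mathbf{a}\cdot(\mathbf{a}\times \mathbf{b}) = 0$ and $\mathbf{a}\cdot(\mathbf{b}\times\mathbf{c}) = \mathbf{b}\cdot(\mathbf{c}\times\mathbf{a})$. The regularity $\boldsymbol{\varphi}\in C\big([0,\infty);H^{3}(I_L)\big)\cap C^{1}\big([0,\infty);H^{1}(I_L)\big)$ is enough to justify the time differentiation of the $L^{2}$-norms in the definition of $E$.

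First I would write
\[
\tfrac{1}{2}\tfrac{d}{dt}E(\boldsymbol{\varphi}(t))
=(\boldsymbol{\varphi}_s,\boldsymbol{\varphi}_{ts}) - \tfrac{1}{R^{2}}(\boldsymbol{\varphi},\boldsymbol{\varphi}_t).
\]
Integration by parts in the first term produces the boundary contribution $[\boldsymbol{\varphi}_s\cdot\boldsymbol{\varphi}_t]_{0}^{L}$, which vanishes: the Dirichlet boundary condition $\boldsymbol{\varphi}(0,t)=\boldsymbol{\varphi}(L,t)=\mathbf{0}$ implies $\boldsymbol{\varphi}_t(0,t)=\boldsymbol{\varphi}_t(L,t)=\mathbf{0}$. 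So
\[
\tfrac{1}{2}\tfrac{d}{dt}E(\boldsymbol{\varphi}(t))
= -(\boldsymbol{\varphi}_{ss},\boldsymbol{\varphi}_t) - \tfrac{1}{R^{2}}(\boldsymbol{\varphi},\boldsymbol{\varphi}_t).
\]

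Next I would plug in the evolution equation for $\boldsymbol{\varphi}$ in (\ref{perteq}) and use the fact that $\mathbf{v}^{R}_{ss}=-\frac{1}{R^{2}}\mathbf{v}^{R}$, which is immediate from the explicit form of $\mathbf{v}^{R}$ obtained in Section \ref{hasiplane}. The equation becomes
\[
\boldsymbol{\varphi}_t=\boldsymbol{\varphi}\times\boldsymbol{\varphi}_{ss}
-\tfrac{1}{R^{2}}\boldsymbol{\varphi}\times\mathbf{v}^{R}
+\mathbf{v}^{R}\times\boldsymbol{\varphi}_{ss}.
\]
In the pairing $(\boldsymbol{\varphi}_{ss},\boldsymbol{\varphi}_t)$ the terms $\boldsymbol{\varphi}_{ss}\cdot(\boldsymbol{\varphi}\times\boldsymbol{\varphi}_{ss})$ and $\boldsymbol{\varphi}_{ss}\cdot(\mathbf{v}^{R}\times\boldsymbol{\varphi}_{ss})$ vanish pointwise since $\boldsymbol{\varphi}_{ss}$ appears twice. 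In the pairing $(\boldsymbol{\varphi},\boldsymbol{\varphi}_t)$ the terms $\boldsymbol{\varphi}\cdot(\boldsymbol{\varphi}\times\boldsymbol{\varphi}_{ss})$ and $\boldsymbol{\varphi}\cdot(\boldsymbol{\varphi}\times\mathbf{v}^{R})$ vanish for the same reason. The only surviving contributions are
\[
-(\boldsymbol{\varphi}_{ss},\boldsymbol{\varphi}_t)=\tfrac{1}{R^{2}}\int_{I_L}\boldsymbol{\varphi}_{ss}\cdot(\boldsymbol{\varphi}\times\mathbf{v}^{R})\,ds,\qquad
-\tfrac{1}{R^{2}}(\boldsymbol{\varphi},\boldsymbol{\varphi}_t)=-\tfrac{1}{R^{2}}\int_{I_L}\boldsymbol{\varphi}\cdot(\mathbf{v}^{R}\times\boldsymbol{\varphi}_{ss})\,ds.
\]
Cyclic invariance of the scalar triple product gives $\boldsymbol{\varphi}_{ss}\cdot(\boldsymbol{\varphi}\times\mathbf{v}^{R})=\boldsymbol{\varphi}\cdot(\mathbf{v}^{R}\times\boldsymbol{\varphi}_{ss})$, so the two integrals cancel and $\tfrac{d}{dt}E(\boldsymbol{\varphi}(t))=0$.

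There is no real obstacle here, since the computation is algebraic once one notices that the stationarity relation $\mathbf{v}^{R}_{ss}=-\frac{1}{R^{2}}\mathbf{v}^{R}$ is precisely what allows the quadratic term $-\frac{1}{R^{2}}\|\boldsymbol{\varphi}\|^{2}$ in $E$ to absorb the cross-term $\boldsymbol{\varphi}\times\mathbf{v}^{R}_{ss}$ arising from the linearization. The only thing to watch is the justification of the boundary terms, but both the zero Dirichlet data for $\boldsymbol{\varphi}$ and the consequent zero Dirichlet data for $\boldsymbol{\varphi}_t$ are available from (\ref{perteq}) and the regularity class of $\boldsymbol{\varphi}$.
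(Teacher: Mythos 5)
Your proof is correct and follows essentially the same route as the paper: differentiate \(E\) in time, substitute the perturbation equation from (\ref{perteq}), kill the triple products with a repeated factor, and use the Dirichlet boundary condition to discard boundary terms. The only (harmless) difference is bookkeeping: the paper integrates by parts in both terms of \(E\) and reduces each to the common quantity \((\mbox{\mathversion{bold}$\varphi$}, \mbox{\mathversion{bold}$v$}^{R}_{s}\times \mbox{\mathversion{bold}$\varphi$}_{s})\) via \(\mbox{\mathversion{bold}$v$}^{R}_{sss}=-\frac{1}{R^{2}}\mbox{\mathversion{bold}$v$}^{R}_{s}\), whereas you integrate by parts once and cancel the two surviving terms directly through the zeroth-order relation \(\mbox{\mathversion{bold}$v$}^{R}_{ss}=-\frac{1}{R^{2}}\mbox{\mathversion{bold}$v$}^{R}\) and cyclic invariance of the scalar triple product.
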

\begin{proof}
From the first equation in (\ref{perteq}), we have
\begin{align*}
\frac{{\rm d}}{{\rm d}t}\| \mbox{\mathversion{bold}$\varphi$}\|^{2}
=
2(\mbox{\mathversion{bold}$\varphi$},\mbox{\mathversion{bold}$\varphi$}_{t})
=
2(\mbox{\mathversion{bold}$\varphi$}, \mbox{\mathversion{bold}$v$}^{R}
\times \mbox{\mathversion{bold}$\varphi$}_{ss})
=
-2(\mbox{\mathversion{bold}$\varphi$},
\mbox{\mathversion{bold}$v$}^{R}_{s}\times \mbox{\mathversion{bold}$\varphi$}_{s}),
\end{align*}
where integration by parts was used. Furthermore,
\begin{align*}
\frac{{\rm d}}{{\rm d}t}\|\mbox{\mathversion{bold}$\varphi$}_{s}\|^{2}
=
2(\mbox{\mathversion{bold}$\varphi$}_{s},\mbox{\mathversion{bold}$\varphi$}_{st})
=
-2(\mbox{\mathversion{bold}$\varphi$}_{ss},\mbox{\mathversion{bold}$\varphi$}_{t})
&=
-2(\mbox{\mathversion{bold}$\varphi$}_{ss},
\mbox{\mathversion{bold}$\varphi$}\times \mbox{\mathversion{bold}$v$}^{R}_{ss}) \\[3mm]
&= 
2(\mbox{\mathversion{bold}$\varphi$}_{s},\mbox{\mathversion{bold}$\varphi$}\times
\mbox{\mathversion{bold}$v$}^{R}_{sss}) \\[3mm]
&=
-\frac{2}{R^{2}}(\mbox{\mathversion{bold}$\varphi$},
\mbox{\mathversion{bold}$v$}^{R}_{s}\times \mbox{\mathversion{bold}$\varphi$}_{s}),
\end{align*}
where we substituted \( \mbox{\mathversion{bold}$v$}^{R}_{sss}=-\frac{1}{R^{2}}\mbox{\mathversion{bold}$v$}^{R}_{s}\).
Combining the two equalities show that 
\( \frac{{\rm d}}{{\rm d}t}E(\mbox{\mathversion{bold}$\varphi$}(t))=0\),
and this proves the lemma.
\end{proof}
From Lemma \ref{conserved}, the following Corollary immediately follows.
\begin{Cr}
For \( \mbox{\mathversion{bold}$\varphi$}\in C\big( [0,\infty);H^{3}(I_{L})\big) \cap C^{1}\big( [0,\infty);H^{1}(I_{L})\big) \) satisfying 
{\rm (\ref{perteq})}, if \( R> L/\pi\), there exists \( C>0\) such that
\begin{align*}
\| \mbox{\mathversion{bold}$\varphi$}(t)\|_{1}
\leq
C\| \mbox{\mathversion{bold}$\varphi$}_{0}\|_{1}
\end{align*}
holds for all \( t>0\), where \( C>0\) depends on \( L\) and \(R \).
\label{h1est}
\end{Cr}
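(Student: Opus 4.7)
The plan is to combine the conservation law from Lemma \ref{conserved} with the sharp Poincar\'e inequality for $H^1_0(I_L)$. From the boundary conditions in (\ref{perteq}), each component of $\mbox{\mathversion{bold}$\varphi$}(\cdot, t)$ belongs to $H^1_0(I_L)$ for every $t \geq 0$, so the Poincar\'e inequality with best constant yields
\begin{align*}
\| \mbox{\mathversion{bold}$\varphi$}(t) \|^{2} \leq \frac{L^{2}}{\pi^{2}} \| \mbox{\mathversion{bold}$\varphi$}_{s}(t) \|^{2},
\end{align*}
where the constant $L/\pi$ comes from the smallest Dirichlet eigenvalue of $-\partial_s^2$ on $I_L$.

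Using this bound, I would rewrite the conserved energy as
\begin{align*}
E(\mbox{\mathversion{bold}$\varphi$}(t)) = \| \mbox{\mathversion{bold}$\varphi$}_{s}(t) \|^{2} - \frac{1}{R^{2}} \| \mbox{\mathversion{bold}$\varphi$}(t) \|^{2} \geq \left( 1 - \frac{L^{2}}{\pi^{2} R^{2}} \right) \| \mbox{\mathversion{bold}$\varphi$}_{s}(t) \|^{2}.
\end{align*}
The hypothesis $R > L/\pi$ makes the factor $1 - L^{2}/(\pi^{2} R^{2})$ strictly positive, which is the crucial point where the constraint on $R$ enters: it renders the quadratic form $E$ coercive on $H^1_0(I_L)$. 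This is really the only place where anything subtle happens; the rest is bookkeeping.

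Next I would apply Lemma \ref{conserved} to replace $E(\mbox{\mathversion{bold}$\varphi$}(t))$ by $E(\mbox{\mathversion{bold}$\varphi$}_{0})$, and bound the latter trivially by $\| \mbox{\mathversion{bold}$\varphi$}_{0s} \|^{2} \leq \| \mbox{\mathversion{bold}$\varphi$}_{0} \|_{1}^{2}$. This gives
\begin{align*}
\| \mbox{\mathversion{bold}$\varphi$}_{s}(t) \|^{2} \leq \left( 1 - \frac{L^{2}}{\pi^{2} R^{2}} \right)^{-1} \| \mbox{\mathversion{bold}$\varphi$}_{0} \|_{1}^{2}.
\end{align*}
A second application of Poincar\'e then bounds $\| \mbox{\mathversion{bold}$\varphi$}(t) \|$ by $\| \mbox{\mathversion{bold}$\varphi$}_{s}(t) \|$, so adding the two controls and taking square roots yields the desired estimate with a constant $C > 0$ depending only on $L$ and $R$.

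The main obstacle is really conceptual rather than technical: one must recognize that the condition $R > L/\pi$ is exactly the spectral gap condition ensuring $E$ is a coercive norm equivalent to $\| \cdot \|_1^2$ on $H^1_0(I_L)$. Once this is observed, the proof reduces to a two-line application of Poincar\'e together with conservation of $E$.
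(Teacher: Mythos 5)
Your proposal is correct and follows essentially the same route as the paper: the paper likewise uses the boundary conditions in (\ref{perteq}) to apply the sharp Poincar\'e inequality \( \| \mbox{\mathversion{bold}$\varphi$}(t)\| \leq \frac{L}{\pi}\| \mbox{\mathversion{bold}$\varphi$}_{s}(t)\| \), obtains the coercivity bound \( E(\mbox{\mathversion{bold}$\varphi$}(t)) \geq c_{0}\| \mbox{\mathversion{bold}$\varphi$}_{s}(t)\|^{2} \) with \( c_{0}=1-\frac{L^{2}}{R^{2}\pi^{2}}\in (0,1) \) precisely from \( R>L/\pi \), invokes Lemma \ref{conserved} to reduce to \( E(\mbox{\mathversion{bold}$\varphi$}_{0}) \), and finishes with a second application of Poincar\'e. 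Your identification of \( R>L/\pi \) as the spectral-gap condition making \( E \) equivalent to the \( H^{1}_{0} \)-norm matches the paper's argument exactly, and your trivial bound \( E(\mbox{\mathversion{bold}$\varphi$}_{0})\leq \|\mbox{\mathversion{bold}$\varphi$}_{0}\|_{1}^{2} \) is the same step the paper takes.
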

\begin{proof}
Since \( \mbox{\mathversion{bold}$\varphi$}\rvert_{s=0}=\mbox{\mathversion{bold}$\varphi$}\rvert _{s=L} = \mbox{\mathversion{bold}$0$}\),
the Poincar\'e inequality
\begin{align*}
\| \mbox{\mathversion{bold}$\varphi$}(t)\|
\leq
\frac{L}{\pi}\| \mbox{\mathversion{bold}$\varphi$}_{s}(t)\|,
\end{align*}
is applicable and hence
\begin{align*}
E(\mbox{\mathversion{bold}$\varphi$}(t)) 
\geq 
\| \mbox{\mathversion{bold}$\varphi$}_{s}(t)\|^{2}
-
\frac{1}{R^{2}}
\left(
\frac{L}{\pi}
\right)^{2}
\|\mbox{\mathversion{bold}$\varphi$}_{s}(t)\|^{2}
=
c_{0}\|\mbox{\mathversion{bold}$\varphi$}_{s}(t)\|^{2}
\end{align*}
holds for all \( t>0\), where 
\( c_{0}= 1-\frac{L^{2}}{R^{2} \pi ^{2}} \in (0,1) \). This with 
Lemma \ref{conserved} yields
\begin{align*}
\|\mbox{\mathversion{bold}$\varphi$}_{s}(t)\|^{2}
\leq CE(\mbox{\mathversion{bold}$\varphi$}_{0})
\leq
C\|\mbox{\mathversion{bold}$\varphi$}_{0}\|_{1},
\end{align*}
where \( C>0\) depends on \( L \) and \( R\). 
Applying the Poincar\'e inequality once more, we have
\begin{align*}
\| \mbox{\mathversion{bold}$\varphi$}(t)\|_{1} 
\leq
C\|\mbox{\mathversion{bold}$\varphi$}_{0}\|_{1}
\end{align*}
for all \( t>0\).

\end{proof}
Next, we show higher-order estimates.
\begin{lm}
For \( \mbox{\mathversion{bold}$\varphi$}\in C\big( [0,\infty);H^{3}(I_{L})\big) \cap C^{1}\big( [0,\infty);H^{1}(I_{L})\big) \) satisfying {\rm (\ref{perteq})},
there exists \( C>0\) depending only on \( L\) and \(R\) such that
if \( R> L/\pi \), \( \mbox{\mathversion{bold}$\varphi$}\) satisfies
\begin{align*}
\|\mbox{\mathversion{bold}$\varphi$}_{ss}(t)\|_{1}
\leq
C\big( \| \mbox{\mathversion{bold}$\varphi$}_{0}\|_{3}
+
\|\mbox{\mathversion{bold}$\varphi$}_{0}\|_{3}^{3}\big)
\end{align*}
for all \( t>0\).

\label{high}
\end{lm}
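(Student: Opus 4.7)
The plan is to extend the conservation-law strategy of Lemma \ref{conserved} to higher Sobolev norms, exploiting the integrable structure that problem (\ref{slant}) inherits from NLS via the generalized Hasimoto transformation of Theorem \ref{hasi1}. Specifically, for the $\|\mbox{\mathversion{bold}$\varphi$}_{ss}\|$-part of the estimate the relevant conserved functional is the image under the inverse Hasimoto map of the NLS Hamiltonian $\int(|q_{s}|^{2}-\tfrac{1}{4}|q|^{4})\,ds$, which in $v$-variables reads
\[
\mathcal{H}(\mbox{\mathversion{bold}$v$}) := \int_{0}^{L}\Big(|\mbox{\mathversion{bold}$v$}_{ss}|^{2}-\tfrac{5}{4}|\mbox{\mathversion{bold}$v$}_{s}|^{4}\Big)\,ds,
\]
while for the $\|\mbox{\mathversion{bold}$\varphi$}_{sss}\|$-part one uses the next density in the integrable hierarchy, whose leading term is $\int|\mbox{\mathversion{bold}$v$}_{sss}|^{2}\,ds$.

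I would first verify that $\mathcal{H}$ is conserved along (\ref{slant}). Using $\mbox{\mathversion{bold}$v$}_{t}=\mbox{\mathversion{bold}$v$}\times\mbox{\mathversion{bold}$v$}_{ss}$ and repeated integration by parts, the time derivative $\tfrac{d}{dt}\mathcal{H}(\mbox{\mathversion{bold}$v$})$ reduces to a boundary term proportional to $[\kappa^{4}\tau]_{0}^{L}$, where $\kappa,\tau$ are the curvature and torsion of the filament. The endpoint condition $\mbox{\mathversion{bold}$v$}_{ss}\parallel\mbox{\mathversion{bold}$v$}$ at $s=0,L$---which follows from differentiating the Dirichlet boundary conditions in $t$ and applying the PDE---forces $\kappa_{s}=0$ and $\kappa\tau=0$ at the endpoints, so this boundary term vanishes. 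Next, writing $\mbox{\mathversion{bold}$v$}=\mbox{\mathversion{bold}$v$}^{R}+\mbox{\mathversion{bold}$\varphi$}$ and using $\mbox{\mathversion{bold}$v$}^{R}_{ss}=-R^{-2}\mbox{\mathversion{bold}$v$}^{R}$, I would expand $\mathcal{H}(\mbox{\mathversion{bold}$v$})-\mathcal{H}(\mbox{\mathversion{bold}$v$}^{R})$ in powers of $\mbox{\mathversion{bold}$\varphi$}$: the quadratic part $Q_{2}(\mbox{\mathversion{bold}$\varphi$})$ has principal term $\|\mbox{\mathversion{bold}$\varphi$}_{ss}\|^{2}$, and the cubic and quartic remainders are controlled in $L^{\infty}$ by the embedding $H^{1}\hookrightarrow L^{\infty}$ together with Corollary \ref{h1est}. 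Conservation of $\mathcal{H}$ then yields
\[
Q_{2}(\mbox{\mathversion{bold}$\varphi$}(t)) = Q_{2}(\mbox{\mathversion{bold}$\varphi$}_{0}) + O\big(\|\mbox{\mathversion{bold}$\varphi$}(t)\|_{1}^{3}+\|\mbox{\mathversion{bold}$\varphi$}_{0}\|_{1}^{3}\big),
\]
and by Corollary \ref{h1est} the right-hand side is bounded by $C(\|\mbox{\mathversion{bold}$\varphi$}_{0}\|_{3}^{2}+\|\mbox{\mathversion{bold}$\varphi$}_{0}\|_{3}^{6})$. Coercivity of $Q_{2}$---where the hypothesis $R>L/\pi$ plays the same spectral-gap role as in Corollary \ref{h1est} via the Poincar\'e inequality---isolates $\|\mbox{\mathversion{bold}$\varphi$}_{ss}(t)\|^{2}$, and an analogous treatment of the next conserved density in the hierarchy yields the $\|\mbox{\mathversion{bold}$\varphi$}_{sss}(t)\|^{2}$ bound. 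Combining the two gives the stated estimate.

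The main obstacle is the verification of the conservation identities in the initial-boundary value setting. On the torus or on $\mathbf{R}$ these are classical consequences of the integrable structure, but here every integration by parts produces boundary contributions, and each must be shown to vanish using the specific algebraic structure $\mbox{\mathversion{bold}$v$}_{ss}\parallel\mbox{\mathversion{bold}$v$}$ at the endpoints---and, for the $H^{3}$-level density, further parallelism relations derived by differentiating the boundary conditions in $t$ repeatedly. This bookkeeping of boundary terms is the technical heart of the argument and is considerably more involved at the $H^{3}$ level than at the $H^{2}$ level. A secondary point is the coercivity of $Q_{2}$: after absorbing the quartic $-\tfrac{5}{4}|\mbox{\mathversion{bold}$v$}_{s}|^{4}$-correction, the remaining quadratic form must dominate $\|\mbox{\mathversion{bold}$\varphi$}_{ss}\|^{2}$ on the admissible space of perturbations vanishing at the endpoints, and the threshold $R>L/\pi$ is exactly what is needed for this.
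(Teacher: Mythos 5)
Your overall strategy is the same as the paper's: the functional you call $\mathcal{H}$ and the ``next density in the hierarchy'' are precisely the conserved quantities $E_{1}$ and $E_{2}$ of (\ref{conserve1})--(\ref{conserve2}), and the paper likewise expands around $v^{R}$, controls remainders via Corollary \ref{h1est} and interpolation, and extracts $\|\varphi_{ss}(t)\|^{2}$ and $\|\varphi_{sss}(t)\|^{2}$ by coercivity of the leading term. (The paper simply asserts the conservation identities ``by direct calculation''; your boundary bookkeeping via $\kappa_{s}=\kappa\tau=0$ at the endpoints is the substance behind that assertion and matches the trace identities the paper does derive.)

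There is, however, a genuine gap in your expansion step. The difference $\mathcal{H}(v^{R}+\varphi)-\mathcal{H}(v^{R})$ is \emph{not} quadratic-plus-higher in $\varphi$: it contains the first-variation terms $2(v^{R}_{ss},\varphi_{ss})-5(\lvert v^{R}_{s}\rvert^{2}v^{R}_{s},\varphi_{s})$ (and, at the $E_{2}$ level, $2(v^{R}_{sss},\varphi_{sss})$ etc.), which are linear in $\varphi$ and involve the highest derivatives being estimated. These cannot be controlled by Corollary \ref{h1est} or by $L^{\infty}$ embeddings, and absorbing them crudely by Cauchy--Schwarz leaves an $O(1)$ constant independent of $\|\varphi_{0}\|$, destroying the stability estimate. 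The paper's resolution is the pointwise constraint identity (\ref{nostr}), $2v^{R}\cdot\varphi=-\lvert\varphi\rvert^{2}$, which holds because both $v$ and $v^{R}$ have unit length, combined with the trace identities $v^{R}\cdot\varphi_{s}\rvert_{s=0,L}=0$ (differentiate (\ref{nostr}) in $s$ and use $\varphi\rvert_{s=0,L}=0$) and, for $E_{2}$, $v^{R}_{s}\cdot\varphi_{ss}\rvert_{s=0,L}=0$ (take $v^{R}_{s}\times$ the equation in (\ref{perteq}) and use $\varphi_{t}\rvert_{s=0,L}=0$). After integration by parts these linear terms collapse to quantities like $-\frac{1}{R^{4}}\|\varphi\|^{2}$, i.e.\ quadratic in \emph{low} norms, which Corollary \ref{h1est} then controls. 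Your proposal never invokes the unit-length constraint, so as written the claim $Q_{2}(\varphi(t))=Q_{2}(\varphi_{0})+O(\|\varphi(t)\|_{1}^{3}+\|\varphi_{0}\|_{1}^{3})$ fails. A secondary imprecision: $R>L/\pi$ enters only through Corollary \ref{h1est}; no further spectral condition on $Q_{2}$ is needed, since the indefinite lower-order quadratic pieces (e.g.\ $-\tfrac{5}{2R^{2}}\|\varphi_{s}\|^{2}$) are already bounded by the conserved $H^{1}$ estimate and the $\|\varphi_{ss}\|\,\|\varphi_{s}\|^{3}$ cross term is absorbed by interpolation.
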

\begin{proof}
We make use of conserved quantities for problem (\ref{slant}), which was 
also utilized in \cite{23}. Properties of solutions of
problem (\ref{slant}) that we make use of here are either well known, or proved in \cite{23}
and hence, we will use them without proof.
The perturbed arc-shaped solution 
\( \mbox{\mathversion{bold}$v$} \) satisfies
\( \lvert \mbox{\mathversion{bold}$v$}(s,t)\rvert =1\) for all \( s\in I_{L}\) and 
\( t>0\). Hence,
\begin{align*}
1=\lvert\mbox{\mathversion{bold}$v$}(s,t)\rvert^{2}
=
\lvert \mbox{\mathversion{bold}$v$}^{R}(s)
+\mbox{\mathversion{bold}$\varphi$}(s,t)\rvert ^{2}
\end{align*}
for all \( s\in I_{L}\) and \( t>0\).
Since \( \lvert \mbox{\mathversion{bold}$v$}^{R}(s)\rvert =1\), we have
\begin{align}
2\mbox{\mathversion{bold}$v$}^{R}(s)\cdot \mbox{\mathversion{bold}$\varphi$}(s,t)
=
-\lvert\mbox{\mathversion{bold}$\varphi$}(s,t)\rvert^{2}
\label{nostr}
\end{align}
for all \( s\in I_{L}\) and \( t>0\).
We also have the following conserved quantities.
\begin{align}
&\frac{{\rm d}}{{\rm d}t}
\bigg\{
\|\mbox{\mathversion{bold}$v$}_{ss}\|^{2}
-\frac{5}{4}\| \lvert \mbox{\mathversion{bold}$v$}_{s}\rvert ^{2} \|^{2} 
\bigg\} = 0,  \label{conserve1} \\[3mm]
&\frac{{\rm d}}{{\rm d}t}
\bigg\{
\| \mbox{\mathversion{bold}$v$}_{sss}\|^{2}
-
\frac{7}{2}\| \lvert \mbox{\mathversion{bold}$v$}_{s}\rvert
\lvert \mbox{\mathversion{bold}$v$}_{ss}\rvert \|^{2}
-
14\| \mbox{\mathversion{bold}$v$}_{s}\cdot \mbox{\mathversion{bold}$v$}_{ss}\|^{2}
+
\frac{21}{8}\| \lvert \mbox{\mathversion{bold}$v$}_{s}\rvert ^{3} \|^{2}
\bigg\}=0.
\label{conserve2}
\end{align}
These quantities are well known, and the above equality can be verified by 
direct calculation.

First we set
\begin{align*}
E_{1}(\mbox{\mathversion{bold}$v$}(t))
:=
\|\mbox{\mathversion{bold}$v$}_{ss}(t)\|^{2}
-\frac{5}{4}\| \lvert \mbox{\mathversion{bold}$v$}_{s}(t)\rvert ^{2} \|^{2}.
\end{align*}
For \( t\geq 0\) we substitute 
\( \mbox{\mathversion{bold}$v$}=\mbox{\mathversion{bold}$v$}^{R}+\mbox{\mathversion{bold}$\varphi$}\) and decompose as follows.
\begin{align*}
E_{1}(\mbox{\mathversion{bold}$v$}(t))
=
E_{1}(\mbox{\mathversion{bold}$v$}^{R})
+
E_{1}(\mbox{\mathversion{bold}$\varphi$}(t))
+
R_{1}(\mbox{\mathversion{bold}$v$}^{R},\mbox{\mathversion{bold}$\varphi$}(t)),
\end{align*}
where \( R_{1}(\mbox{\mathversion{bold}$v$}^{R},\mbox{\mathversion{bold}$\varphi$}(t))\)
is given by
\begin{align*}
R_{1}(\mbox{\mathversion{bold}$v$}^{R},\mbox{\mathversion{bold}$\varphi$}(t))
&=
2(\mbox{\mathversion{bold}$v$}^{R}_{ss},\mbox{\mathversion{bold}$\varphi$}_{ss})
-
5(\lvert \mbox{\mathversion{bold}$v$}^{R}_{s}\rvert ^{2}
\mbox{\mathversion{bold}$v$}^{R}_{s},
\mbox{\mathversion{bold}$\varphi$}^{R}_{s})
-
5((\mbox{\mathversion{bold}$v$}^{R}_{s}\cdot \mbox{\mathversion{bold}$\varphi$}_{s})
\mbox{\mathversion{bold}$v$}^{R}_{s},\mbox{\mathversion{bold}$\varphi$}_{s}) \\[3mm]
& \qquad -
\frac{5}{2}(\lvert \mbox{\mathversion{bold}$v$}^{R}_{s}\rvert ^{2}
\mbox{\mathversion{bold}$\varphi$}_{s},\mbox{\mathversion{bold}$\varphi$}_{s})
-
5(\lvert \mbox{\mathversion{bold}$\varphi$}_{s}\rvert ^{2}
\mbox{\mathversion{bold}$v$}^{R}_{s},
\mbox{\mathversion{bold}$\varphi$}_{s})
\end{align*}
On the other hand, since \( E_{1}\) is conserved, we have
\begin{align*}
E_{1}(\mbox{\mathversion{bold}$v$}(t))
=
E_{1}(\mbox{\mathversion{bold}$v$}_{0})
&=
E_{1}(\mbox{\mathversion{bold}$v$}^{R}+\mbox{\mathversion{bold}$\varphi$}_{0})\\[3mm]
&=E_{1}(\mbox{\mathversion{bold}$v$}^{R}) + E_{1}(\mbox{\mathversion{bold}$\varphi$}_{0})
+
R_{1}(\mbox{\mathversion{bold}$v$}^{R},\mbox{\mathversion{bold}$\varphi$}_{0}),
\end{align*}
and combining the two equalities, we have
\begin{align*}
E_{1}(\mbox{\mathversion{bold}$\varphi$}(t))
=
E_{1}(\mbox{\mathversion{bold}$\varphi$}_{0})
-
R_{1}(\mbox{\mathversion{bold}$v$}^{R},\mbox{\mathversion{bold}$\varphi$})
+
R_{1}(\mbox{\mathversion{bold}$v$}^{R},\mbox{\mathversion{bold}$\varphi$}_{0}).
\end{align*}
We estimate \( R_{1}(\mbox{\mathversion{bold}$v$}^{R},\mbox{\mathversion{bold}$\varphi$}) \)
in detail and omit the details for 
\( R_{1}(\mbox{\mathversion{bold}$v$}^{R},\mbox{\mathversion{bold}$\varphi$}_{0})\)
since they are the same.
The terms that are linear with respect to
\( \mbox{\mathversion{bold}$\varphi$}\) can be estimated as follows.
First we have
\begin{align*}
2(\mbox{\mathversion{bold}$v$}^{R}_{ss},\mbox{\mathversion{bold}$\varphi$}_{ss})
=
-\frac{2}{R^{2}}(\mbox{\mathversion{bold}$v$}^{R},
\mbox{\mathversion{bold}$\varphi$}_{ss})
&=
\frac{2}{R^{2}}(\mbox{\mathversion{bold}$v$}^{R}_{s},
\mbox{\mathversion{bold}$\varphi$}_{s})
-\frac{2}{R^{2}}
\big[ \mbox{\mathversion{bold}$v$}^{R}\cdot 
\mbox{\mathversion{bold}$\varphi$}_{s}\big]^{L}_{s=0}\\[3mm]
&=
-\frac{2}{R^{2}}(\mbox{\mathversion{bold}$v$}^{R}_{ss},\mbox{\mathversion{bold}$\varphi$})
-\frac{2}{R^{2}}
\big[ \mbox{\mathversion{bold}$v$}^{R}\cdot 
\mbox{\mathversion{bold}$\varphi$}_{s}\big]^{L}_{s=0} \\[3mm]
&=
\frac{2}{R^{4}}(\mbox{\mathversion{bold}$v$}^{R},\mbox{\mathversion{bold}$\varphi$})
-\frac{2}{R^{2}}
\big[ \mbox{\mathversion{bold}$v$}^{R}\cdot 
\mbox{\mathversion{bold}$\varphi$}_{s}\big]^{L}_{s=0}.
\end{align*}
Integrating equation (\ref{nostr}) with respect to \( s\) shows that
\( (\mbox{\mathversion{bold}$v$}^{R},\mbox{\mathversion{bold}$\varphi$})= -\frac{1}{2}\|\mbox{\mathversion{bold}$\varphi$}\|^{2}\).
Furthermore, differentiating equation (\ref{nostr}) with respect to \( s\) 
and taking the trace \( s=0,L\) yields
\( \mbox{\mathversion{bold}$v$}^{R}\cdot \mbox{\mathversion{bold}$\varphi$}_{s}\rvert_{s=0,L} = 0\).
Hence we have
\begin{align*}
2(\mbox{\mathversion{bold}$v$}^{R}_{ss},\mbox{\mathversion{bold}$\varphi$}_{ss})
=
-\frac{1}{R^{4}}\|\mbox{\mathversion{bold}$\varphi$}\|^{2}.
\end{align*}
Similarly, we have
\begin{align*}
-5(\lvert \mbox{\mathversion{bold}$v$}^{R}_{s}\rvert^{2}
\mbox{\mathversion{bold}$v$}^{R}_{s},
\mbox{\mathversion{bold}$\varphi$}_{s})
=
-\frac{5}{R^{2}}(\mbox{\mathversion{bold}$v$}^{R}_{s},\mbox{\mathversion{bold}$\varphi$}_{s})
=
\frac{5}{2R^{4}}\|\mbox{\mathversion{bold}$\varphi$}_{s}\|^{2},
\end{align*}
where \( \lvert\mbox{\mathversion{bold}$v$}^{R}_{s}\rvert \equiv \frac{1}{R}\) was 
substituted. 
Taking into account the above two equalities, we have
\begin{align*}
\lvert 2(\mbox{\mathversion{bold}$v$}^{R}_{ss},
\mbox{\mathversion{bold}$\varphi$}_{ss})
-5(\lvert \mbox{\mathversion{bold}$v$}^{R}_{s}\rvert^{2}
\mbox{\mathversion{bold}$v$}^{R}_{s},
\mbox{\mathversion{bold}$\varphi$}_{s}) \rvert
\leq
C\| \mbox{\mathversion{bold}$\varphi$}\|_{1}^{2}
\leq
C\| \mbox{\mathversion{bold}$\varphi$}_{0}\|_{1}^{2},
\end{align*}
where Corollary \ref{h1est} was applied in the last inequality.

For the other terms, we have
\begin{align*}
\big\lvert -
5((\mbox{\mathversion{bold}$v$}^{R}_{s}\cdot \mbox{\mathversion{bold}$\varphi$}_{s})
\mbox{\mathversion{bold}$v$}^{R}_{s},\mbox{\mathversion{bold}$\varphi$}_{s})
-
\frac{5}{2}(\lvert \mbox{\mathversion{bold}$v$}^{R}_{s}\rvert ^{2}
\mbox{\mathversion{bold}$\varphi$}_{s},\mbox{\mathversion{bold}$\varphi$}_{s})
&-
5(\lvert \mbox{\mathversion{bold}$\varphi$}_{s}\rvert ^{2}
\mbox{\mathversion{bold}$v$}^{R}_{s},
\mbox{\mathversion{bold}$\varphi$}_{s}) \big\rvert \\[3mm]
&\leq
C\big( 
\|\mbox{\mathversion{bold}$\varphi$}_{s}\|^{2}
+
\| \mbox{\mathversion{bold}$\varphi$}_{s} \|^{2}
\|\mbox{\mathversion{bold}$\varphi$}_{s}\|_{1}
\big) \\[3mm]
&\leq
C\big( \|\mbox{\mathversion{bold}$\varphi$}_{0}\|_{1}^{2}
+
\| \mbox{\mathversion{bold}$\varphi$}_{0}\|_{1}^{3}
\big)
\end{align*}
where \( C>0\) depends only on \( R \). 
Corollary \ref{h1est} and the interpolation inequality yield
\begin{align*}
E_{1}(\mbox{\mathversion{bold}$\varphi$}(t))
&\geq 
\|\mbox{\mathversion{bold}$\varphi$}_{ss}\|^{2}
-C\| \mbox{\mathversion{bold}$\varphi$}_{ss}\|
\| \mbox{\mathversion{bold}$\varphi$}_{s}\|^{3}
-
C\|\mbox{\mathversion{bold}$\varphi$}_{s}\|^{2} \\[3mm]
&\geq
c_{1}\|\mbox{\mathversion{bold}$\varphi$}_{ss}\|^{2}
-C\|\mbox{\mathversion{bold}$\varphi$}_{s}\|^{2}
-C\|\mbox{\mathversion{bold}$\varphi$}_{s}\|^{6} \\[3mm]
&\geq
c_{1}\|\mbox{\mathversion{bold}$\varphi$}_{ss}\|^{2}
-
C\|\mbox{\mathversion{bold}$\varphi$}_{0}\|_{1}^{2}
-
C\| \mbox{\mathversion{bold}$\varphi$}_{0}\|_{1}^{6}
\end{align*}
where \( c_{1}>0\) depends only on \( L\) and \( C>0\) depends on \( L\)
and \( R \). Combining all of the obtained estimates yields
\begin{align}
\| \mbox{\mathversion{bold}$\varphi$}_{ss}(t)\|^{2}
\leq
C\big(
\| \mbox{\mathversion{bold}$\varphi$}_{0}\|_{2}^{2}
+
\|\mbox{\mathversion{bold}$\varphi$}_{0}\|_{2}^{6}
\big)
\label{h2estimate}
\end{align}
for all \( t>0\). Here, \( C>0\) depends on \( L\) and \( R\).

Similarly, if we set
\begin{align*}
E_{2}(\mbox{\mathversion{bold}$v$}(t))
=
\| \mbox{\mathversion{bold}$v$}_{sss}(t)\|^{2}
-
\frac{7}{2}\| \lvert \mbox{\mathversion{bold}$v$}_{s}(t)\rvert
\lvert \mbox{\mathversion{bold}$v$}_{ss}(t)\rvert \|^{2}
-
14\| \mbox{\mathversion{bold}$v$}_{s}(t)\cdot \mbox{\mathversion{bold}$v$}_{ss}(t)\|^{2}
+
\frac{21}{8}\| \lvert \mbox{\mathversion{bold}$v$}_{s}(t)\rvert ^{3} \|^{2},
\end{align*}
we have
\begin{align*}
E_{2}(\mbox{\mathversion{bold}$v$}^{R}+\mbox{\mathversion{bold}$\varphi$}_{0})
&=E_{2}(\mbox{\mathversion{bold}$v$}^{R})+E_{2}(\mbox{\mathversion{bold}$\varphi$}_{0})
+
R_{2}(\mbox{\mathversion{bold}$v$}^{R},\mbox{\mathversion{bold}$\varphi$}_{0}), \\[3mm]
E_{2}(\mbox{\mathversion{bold}$v$}^{R}+\mbox{\mathversion{bold}$\varphi$}(t))
&=
E_{2}(\mbox{\mathversion{bold}$v$}^{R})+E_{2}(\mbox{\mathversion{bold}$\varphi$}(t))
+
R_{2}(\mbox{\mathversion{bold}$v$}^{R},\mbox{\mathversion{bold}$\varphi$}(t)),
\end{align*}
and the conservation of \(  E_{2}\) implies that the above two are equal.
Hence,
\begin{align*}
E_{2}(\mbox{\mathversion{bold}$\varphi$}(t))
=
E_{2}(\mbox{\mathversion{bold}$\varphi$}_{0})+R_{2}(\mbox{\mathversion{bold}$v$}^{R},
\mbox{\mathversion{bold}$\varphi$}_{0})-R_{2}(\mbox{\mathversion{bold}$v$}^{R},
\mbox{\mathversion{bold}$\varphi$}(t)).
\end{align*}
Here, 
\begin{align*}
R_{2}(\mbox{\mathversion{bold}$v$}^{R},\mbox{\mathversion{bold}$\varphi$}(t))
&=
2(\mbox{\mathversion{bold}$v$}^{R}_{sss},\mbox{\mathversion{bold}$\varphi$}_{sss})
+
2(\lvert \mbox{\mathversion{bold}$v$}^{R}_{s}\rvert ^{2}
\mbox{\mathversion{bold}$v$}^{R}_{ss},
\mbox{\mathversion{bold}$\varphi$}_{ss})
+
(\lvert \mbox{\mathversion{bold}$v$}^{R}_{ss}\rvert^{2}
\mbox{\mathversion{bold}$v$}^{R}_{s},
\mbox{\mathversion{bold}$\varphi$}_{s}) \\[3mm]
& \qquad +
8(\lvert\mbox{\mathversion{bold}$v$}^{R}_{s}\rvert^{4}
\mbox{\mathversion{bold}$v$}^{R}_{s},
\mbox{\mathversion{bold}$\varphi$}_{s})
+
N(\mbox{\mathversion{bold}$v$}^{R},\mbox{\mathversion{bold}$\varphi$}(t)),
\end{align*}
where \( N(\mbox{\mathversion{bold}$v$}^{R},\mbox{\mathversion{bold}$\varphi$}(t)) \) are
terms that are nonlinear with respect to \( \mbox{\mathversion{bold}$\varphi$}\).
We estimate as follows. First we have
\begin{align*}
2(\mbox{\mathversion{bold}$v$}^{R}_{sss},\mbox{\mathversion{bold}$\varphi$}_{sss})
=
-\frac{2}{R^{2}}(\mbox{\mathversion{bold}$v$}^{R}_{s},\mbox{\mathversion{bold}$\varphi$}_{sss})
=
-\frac{2}{R^{2}}[ \mbox{\mathversion{bold}$v$}^{R}_{s}\cdot
 \mbox{\mathversion{bold}$\varphi$}_{ss}]^{L}_{s=0}
+\frac{2}{R^{2}}(\mbox{\mathversion{bold}$v$}^{R}_{ss},\mbox{\mathversion{bold}$\varphi$}_{ss})
\end{align*}
Taking the exterior product of 
\( \mbox{\mathversion{bold}$v$}^{R}_{s}\) from the left with the first equation in
problem (\ref{perteq}), we have
\begin{align}
\mbox{\mathversion{bold}$v$}^{R}_{s}\times \mbox{\mathversion{bold}$\varphi$}_{t}
=
\mbox{\mathversion{bold}$v$}^{R}_{s}\times
(\mbox{\mathversion{bold}$\varphi$}\times \mbox{\mathversion{bold}$\varphi$}_{ss})
+
\mbox{\mathversion{bold}$v$}^{R}_{s}\times 
(\mbox{\mathversion{bold}$\varphi$}\times \mbox{\mathversion{bold}$v$}^{R}_{ss})
+
\mbox{\mathversion{bold}$v$}^{R}_{s}\times
(\mbox{\mathversion{bold}$v$}^{R}\times \mbox{\mathversion{bold}$\varphi$}_{ss}).
\label{trace}
\end{align}
Since \( \mbox{\mathversion{bold}$\varphi$}_{t}\rvert_{s=0,L}=\mbox{\mathversion{bold}$0$}\),
taking the trace at \( s=0,L\) in (\ref{trace}) yields
\begin{align*}
(\mbox{\mathversion{bold}$v$}^{R}_{s}\cdot \mbox{\mathversion{bold}$\varphi$}_{ss})
\mbox{\mathversion{bold}$v$}^{R}\rvert _{s=0,L}=\mbox{\mathversion{bold}$0$}.
\end{align*}
\( \lvert\mbox{\mathversion{bold}$v$}^{R}\rvert \equiv 1\) further implies that
\( \mbox{\mathversion{bold}$v$}^{R}_{s}\cdot \mbox{\mathversion{bold}$\varphi$}_{ss}\rvert_{s=0,L}=0\). Hence, we have
\begin{align*}
2(\mbox{\mathversion{bold}$v$}^{R}_{sss},\mbox{\mathversion{bold}$\varphi$}_{sss})
=
\frac{2}{R^{2}}(\mbox{\mathversion{bold}$v$}^{R}_{ss},\mbox{\mathversion{bold}$\varphi$}_{ss})
=
-\frac{1}{R^{6}}\| \mbox{\mathversion{bold}$\varphi$} \|^{2}.
\end{align*}
Next we estimate
\begin{align*}
2(\lvert\mbox{\mathversion{bold}$v$}^{R}_{s}\rvert ^{2}
\mbox{\mathversion{bold}$v$}^{R}_{ss},
\mbox{\mathversion{bold}$\varphi$}_{ss})
&=
\frac{2}{R^{2}}(\mbox{\mathversion{bold}$v$}^{R}_{ss},
\mbox{\mathversion{bold}$\varphi$}_{ss})
=
-\frac{1}{R^{6}}\|\mbox{\mathversion{bold}$\varphi$}\|^{2}, \\[3mm]
(\lvert \mbox{\mathversion{bold}$v$}^{R}_{ss}\rvert^{2}
\mbox{\mathversion{bold}$v$}^{R}_{s},
\mbox{\mathversion{bold}$\varphi$}_{s}) 
+
8(\lvert\mbox{\mathversion{bold}$v$}^{R}_{s}\rvert^{4}
\mbox{\mathversion{bold}$v$}^{R}_{s},
\mbox{\mathversion{bold}$\varphi$}_{s})
&=
\frac{9}{R^{4}}(\mbox{\mathversion{bold}$v$}^{R}_{s},
\mbox{\mathversion{bold}$\varphi$}_{s})
=
-\frac{9}{2R^{4}}\|\mbox{\mathversion{bold}$\varphi$}_{s}\|^{2},
\end{align*}
and finally,
\begin{align*}
\lvert N(\mbox{\mathversion{bold}$v$}^{R},
\mbox{\mathversion{bold}$\varphi$}(t)) \rvert
\leq
C\big(
\| \mbox{\mathversion{bold}$\varphi$}\|_{2}^{2}
+
\| \mbox{\mathversion{bold}$\varphi$}\|_{2}^{5}
\big).
\end{align*}
Additionally, 
\begin{align*}
E_{2}( \mbox{\mathversion{bold}$\varphi$}(t) )
\geq 
\|\mbox{\mathversion{bold}$\varphi$}_{sss}(t)\|^{2}
-
C\big(
\| \mbox{\mathversion{bold}$\varphi$}(t)\|_{2}^{2}
+
\| \mbox{\mathversion{bold}$\varphi$}(t)\|_{2}^{6}
\big)
\end{align*}
along with all the estimates previously obtained shows that
\begin{align}
\| \mbox{\mathversion{bold}$\varphi$}_{sss}(t) \|^{2}
\leq
C\big(
\| \mbox{\mathversion{bold}$\varphi$}_{0}\|_{3}^{2}
+
\|\mbox{\mathversion{bold}$\varphi$}_{0}\|_{3}^{6}
\big)
\label{h3estimate}
\end{align}
holds for all \( t>0\), where \( C>0\) depends on \( L \) and \( R\).
Estimates (\ref{h2estimate}) and (\ref{h3estimate}) 
combined proves Lemma \ref{high}.
\end{proof}

In summary, we have the following.
\begin{pr}
For \( R> L/\pi \), the arc-shaped solution 
\( \mbox{\mathversion{bold}$v$}^{R} \) of problem {\rm (\ref{slant})} is 
stable in the following sense.
For \( \mbox{\mathversion{bold}$\varphi$}_{0}\in H^{3}(I_{L})\)
satisfying Assumption {\rm \ref{assump}}, the perturbed arc-shaped solution
\( \mbox{\mathversion{bold}$v$}\in C\big( [0,\infty);H^{3}(I_{L})\big) \cap C^{1}\big( [0,\infty);H^{1}(I_{L})\big)\) satisfies
\begin{align*}
\| \mbox{\mathversion{bold}$v$}(t)-\mbox{\mathversion{bold}$v$}^{R}\|_{3}
\leq
C\big( 
\| \mbox{\mathversion{bold}$\varphi$}_{0}\|_{3}
+
\| \mbox{\mathversion{bold}$\varphi$}_{0}\|_{3}^{3}
\big)
\end{align*}
for all \( t>0\). Here, \( C>0\) depends on \( L\) and \(R\).

\label{arcstable}
\end{pr}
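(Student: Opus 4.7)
The plan is simply to assemble the two time-uniform estimates that Corollary \ref{h1est} and Lemma \ref{high} already furnish for the perturbation \( \mbox{\mathversion{bold}$\varphi$}:=\mbox{\mathversion{bold}$v$}-\mbox{\mathversion{bold}$v$}^{R}\). The proposition is essentially a packaging statement: all the analytically hard work has been done in the preceding steps, and what remains is to verify that those estimates apply to the perturbation and then combine them.

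First I would check that \( \mbox{\mathversion{bold}$\varphi$}\) is a legitimate solution of the perturbation problem (\ref{perteq}). Subtracting the stationary identity \( \mbox{\mathversion{bold}$v$}^{R}_{t}=\mbox{\mathversion{bold}$v$}^{R}\times \mbox{\mathversion{bold}$v$}^{R}_{ss}=\mbox{\mathversion{bold}$0$}\) from the VFE satisfied by \( \mbox{\mathversion{bold}$v$}\), and expanding \( \mbox{\mathversion{bold}$v$}\times\mbox{\mathversion{bold}$v$}_{ss}\), produces the evolution equation in (\ref{perteq}). Assumption \ref{assump} (A2), together with the fact that \( \mbox{\mathversion{bold}$v$}^{R}\) and \( \mbox{\mathversion{bold}$b$}\) already satisfy the \(0\)-th order compatibility condition, forces \( \mbox{\mathversion{bold}$\varphi$}_{0}\rvert_{s=0}=\mbox{\mathversion{bold}$\varphi$}_{0}\rvert_{s=L}=\mbox{\mathversion{bold}$0$}\). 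Since \( \mbox{\mathversion{bold}$v$}(0,t)=\mbox{\mathversion{bold}$e$}_{1}=\mbox{\mathversion{bold}$v$}^{R}(0)\) and \( \mbox{\mathversion{bold}$v$}(L,t)=\mbox{\mathversion{bold}$b$}=\mbox{\mathversion{bold}$v$}^{R}(L)\) for all \( t\geq 0\), the homogeneous Dirichlet condition for \( \mbox{\mathversion{bold}$\varphi$}\) persists in time. The regularity \( \mbox{\mathversion{bold}$\varphi$}\in C\big([0,\infty);H^{3}(I_{L})\big)\cap C^{1}\big([0,\infty);H^{1}(I_{L})\big)\) is inherited from \( \mbox{\mathversion{bold}$v$}\) since \( \mbox{\mathversion{bold}$v$}^{R}\) is time-independent and smooth.

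Next, since \( R>L/\pi \), I would invoke Corollary \ref{h1est} to obtain
\[
\|\mbox{\mathversion{bold}$\varphi$}(t)\|_{1}\leq C\|\mbox{\mathversion{bold}$\varphi$}_{0}\|_{1}\leq C\|\mbox{\mathversion{bold}$\varphi$}_{0}\|_{3}
\]
and Lemma \ref{high} to obtain
\[
\|\mbox{\mathversion{bold}$\varphi$}_{ss}(t)\|_{1}\leq C\big(\|\mbox{\mathversion{bold}$\varphi$}_{0}\|_{3}+\|\mbox{\mathversion{bold}$\varphi$}_{0}\|_{3}^{3}\big),
\]
both uniformly in \( t>0\), with \( C>0\) depending only on \( L\) and \( R\). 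Since the \( H^{3}\)-norm is equivalent to \( \|\cdot\|_{1}+\|\partial_{s}^{2}\cdot\|_{1}\), summing these two estimates and reinserting \( \mbox{\mathversion{bold}$\varphi$}(t)=\mbox{\mathversion{bold}$v$}(t)-\mbox{\mathversion{bold}$v$}^{R}\) produces the desired inequality.

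No substantive obstacle remains at this stage. The delicate inputs---conservation of the quadratic energy \( E\) (Lemma \ref{conserved}) and of the higher-order invariants \( E_{1},E_{2}\) in (\ref{conserve1})--(\ref{conserve2}), the positivity of \( E\) under the Poincar\'e inequality in the regime \( R>L/\pi\), and the careful manipulation of the cross-terms \( R_{1},R_{2}\) using \( \lvert\mbox{\mathversion{bold}$v$}^{R}\rvert\equiv 1\) together with the boundary trace identity \( \mbox{\mathversion{bold}$v$}^{R}_{s}\cdot\mbox{\mathversion{bold}$\varphi$}_{s}\rvert_{s=0,L}=0\) derived from (\ref{nostr}) and (\ref{trace})---have all been installed in the preceding corollary and lemma. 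If any step were to present difficulty, it would be the verification of persistence of the homogeneous boundary condition for \( \mbox{\mathversion{bold}$\varphi$}\) in time, but that follows immediately from the time-independence of the boundary data of both \( \mbox{\mathversion{bold}$v$}\) and \( \mbox{\mathversion{bold}$v$}^{R}\).
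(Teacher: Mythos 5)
Your proposal is correct and follows essentially the same route as the paper: the paper states Proposition \ref{arcstable} precisely as the summary of Corollary \ref{h1est} and Lemma \ref{high}, having already set up the perturbation equation (\ref{perteq}) and the vanishing boundary values of \( \mbox{\mathversion{bold}$\varphi$} \) exactly as you verify them. Your assembly via the norm equivalence \( \|\cdot\|_{3}\simeq \|\cdot\|_{1}+\|\partial_{s}^{2}\cdot\|_{1} \), with constants depending only on \( L \) and \( R \) in the regime \( R>L/\pi \), matches the paper's argument.
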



\subsubsection{Stability Estimates and the generalized Hasimoto transformation}

In this section, we show that stability estimates for 
solutions of problem (\ref{slant}) transfer to stability
estimates for solutions of problem (\ref{plane})
through the generalized Hasimoto transformation.

Let, \( \mbox{\mathversion{bold}$v$}_{1}\) and 
\( \mbox{\mathversion{bold}$v$}_{2}\) be 
solutions of problem (\ref{slant}) belonging to
\( C\big( [0,\infty);H^{3}(I_{L})\big) \cap C^{1}\big( [0,\infty);\)
\( H^{1}(I_{L})\big) \)
with possibly different initial datum, but the same 
boundary datum. Additionally, 
suppose that there exists \( C_{0} >0\) independent of
\( \mbox{\mathversion{bold}$v$}_{1}\) and 
\( \mbox{\mathversion{bold}$v$}_{2}\) and \( M \geq 0\) such that
\begin{align}
\| \mbox{\mathversion{bold}$v$}_{1}(t)
-\mbox{\mathversion{bold}$v$}_{2}(t)\|_{3}
\leq
C_{0} M
\label{hasiv}
\end{align}
holds for all \( t>0\).
Define \( \mbox{\mathversion{bold}$e$}^{1}\) and
\( \mbox{\mathversion{bold}$e$}^{2}\) as the solution 
of
\begin{align}
\left\{
\begin{array}{ll}
\mbox{\mathversion{bold}$e$}^{1}_{s}
=-(\mbox{\mathversion{bold}$v$}_{1s}\cdot
\mbox{\mathversion{bold}$e$}^{1})\mbox{\mathversion{bold}$v$}_{1}, 
& s\in I_{L}, \\[3mm]
\mbox{\mathversion{bold}$e$}^{1}\rvert_{s=0}=\mbox{\mathversion{bold}$e$}_{2}, & \
\end{array}\right. \label{e1}
\end{align}
and
\begin{align}
\left\{
\begin{array}{ll}
\mbox{\mathversion{bold}$e$}^{2}_{s}
=-(\mbox{\mathversion{bold}$v$}_{2s}\cdot
\mbox{\mathversion{bold}$e$}^{2})\mbox{\mathversion{bold}$v$}_{2}, 
& s\in I_{L}, \\[3mm]
\mbox{\mathversion{bold}$e$}^{2}\rvert_{s=0}=\mbox{\mathversion{bold}$e$}_{2}, & \
\end{array}\right.
\label{e2}
\end{align}
respectively, and define \( \mbox{\mathversion{bold}$w$}^{1}\) and
\( \mbox{\mathversion{bold}$w$}^{2}\) by
\begin{align}
\mbox{\mathversion{bold}$w$}^{1}
=\mbox{\mathversion{bold}$v$}_{1}\times \mbox{\mathversion{bold}$e$}^{1}
\qquad \text{and} \qquad
\mbox{\mathversion{bold}$w$}^{2}
=\mbox{\mathversion{bold}$v$}_{2}\times \mbox{\mathversion{bold}$e$}^{2}.
\label{w12}
\end{align}
Again, note that \( \mbox{\mathversion{bold}$e$}^{i}\) and
\( \mbox{\mathversion{bold}$w$}^{i}\) depend on \( t\) 
through \( \mbox{\mathversion{bold}$v$}_{i}\).
Similarly to Section \ref{inverse hasimoto}, 
estimating \( \mbox{\mathversion{bold}$e$}^{1}-\mbox{\mathversion{bold}$e$}^{2}\) and 
\( \mbox{\mathversion{bold}$w$}^{1}-\mbox{\mathversion{bold}$w$}^{2}\) using 
(\ref{e1}), (\ref{e2}), and (\ref{w12}) 
yields
\begin{align}
\|
\mbox{\mathversion{bold}$e$}^{1}(t)
-\mbox{\mathversion{bold}$e$}^{2}(t)
\|_{3}
&\leq
CM, \label{hasie} \\[3mm]
\|
\mbox{\mathversion{bold}$w$}^{1}(t)
-\mbox{\mathversion{bold}$w$}^{2}(t)
\|_{3}
&\leq
CM, \label{hasiw}
\end{align}
for \( t>0\).
Here, \( C>0\) depends on \( C_{0}\) and
\( \|\mbox{\mathversion{bold}$v$}_{1s}\|_{2}+\|\mbox{\mathversion{bold}$v$}_{2s}\|_{2}\) and is non-decreasing with respect to 
\( C_{0}>0\) and
\( \|\mbox{\mathversion{bold}$v$}_{1s}\|_{2}+\|\mbox{\mathversion{bold}$v$}_{2s}\|_{2}\)

Since \( \lvert\mbox{\mathversion{bold}$v$}_{i}\rvert^{2}\equiv 1\) \( (i=1,2)\),
we have the decomposition
\begin{align}
\mbox{\mathversion{bold}$v$}_{1s}
=
\psi^{1}_{1}\mbox{\mathversion{bold}$e$}^{1}
+
\psi^{1}_{2}\mbox{\mathversion{bold}$w$}^{1}, \label{hasiest} \\[3mm]
\mbox{\mathversion{bold}$v$}_{2s}
=
\psi^{2}_{1}\mbox{\mathversion{bold}$e$}^{2}
+
\psi^{2}_{2}\mbox{\mathversion{bold}$w$}^{2}, \label{hasiest2}
\end{align}
for some \( \psi^{i}_{j}(s,t)\) \( (i,j=1,2)\).
Furthermore, from equations (\ref{hasiest}) and (\ref{hasiest2}), 
along with (\ref{e1}), (\ref{e2}), and (\ref{w12}),
we have
\begin{align}
\|\mbox{\mathversion{bold}$v$}_{1s}(t)\|_{2}+
\|\mbox{\mathversion{bold}$v$}_{2s}(t)\|_{2}
\leq
C\big( \sum_{i,j=1}^{2} \| \psi^{i}_{j}(t)\|_{2}
+
\sum_{i,j=1}^{2} \| \psi^{i}_{j}(t)\|_{2}^{3}\big),
\label{vphi}
\end{align}
where \( C>0\) is independent of \( \psi^{i}_{j} \ (i,j=1,2)\).
Substituting (\ref{hasiest}) and (\ref{hasiest2}) into the 
first equation of (\ref{e1}) and (\ref{e2}) yields
%
%
\begin{align*}
(\psi^{1}_{1}-\psi^{2}_{1})
\mbox{\mathversion{bold}$v$}_{1}
&=
-(\mbox{\mathversion{bold}$e$}^{1}
-
\mbox{\mathversion{bold}$e$}^{2})_{s}
+
\psi^{2}_{1}
(\mbox{\mathversion{bold}$v$}_{2}-\mbox{\mathversion{bold}$v$}_{1}), \\[3mm]
(\psi^{1}_{2}-\psi^{2}_{2})
\mbox{\mathversion{bold}$v$}_{1}
&=
-(\mbox{\mathversion{bold}$w$}^{1}-\mbox{\mathversion{bold}$w$}^{2})_{s}
+
\psi^{2}_{2}
(\mbox{\mathversion{bold}$v$}_{2}-\mbox{\mathversion{bold}$v$}_{1}),
\end{align*}
and taking the inner product of the above two equations
with \( \mbox{\mathversion{bold}$v$}_{1}\) yields
\begin{align}
(\psi^{1}_{1}-\psi^{2}_{1})
&=
-(\mbox{\mathversion{bold}$e$}^{1}
-
\mbox{\mathversion{bold}$e$}^{2})_{s}
\cdot \mbox{\mathversion{bold}$v$}_{1}
+
\psi^{2}_{1}
\big( (\mbox{\mathversion{bold}$v$}_{2}-\mbox{\mathversion{bold}$v$}_{1}) 
\cdot \mbox{\mathversion{bold}$v$}_{1}\big), \label{hasiest3} \\[3mm]
(\psi^{1}_{2}-\psi^{2}_{2})
&=
-(\mbox{\mathversion{bold}$w$}^{1}-\mbox{\mathversion{bold}$w$}^{2})_{s}
\cdot \mbox{\mathversion{bold}$v$}_{1}
+
\psi^{2}_{2}
\big( (\mbox{\mathversion{bold}$v$}_{2}-\mbox{\mathversion{bold}$v$}_{1})
\cdot \mbox{\mathversion{bold}$v$}_{1} \big). \label{hasiest4}
\end{align}
We see from direct estimates based on equations (\ref{hasiest3}) and 
(\ref{hasiest4}) that
\begin{align*}
\| \psi^{1}_{1}(t)-\psi^{2}_{1}(t) \|_{2} \leq C_{1}M, \\[3mm]
\| \psi^{1}_{2}(t)-\psi^{2}_{2}(t) \|_{2} \leq C_{1}M
\end{align*}
holds for all \( t>0\), where \( C_{1}>0\) depends on \( C_{0}\) and
\( \sum_{i,j=1}^{2} \| \psi^{i}_{j}(t)\|_{2}\) and is 
non-decreasing with respect to \( C_{0}>0\) and
\( \sum_{i,j=1}^{2} \| \psi^{i}_{j}(t)\|_{2}\).
Here, (\ref{hasie}), (\ref{hasiw}), and (\ref{vphi}) was also utilized.

Estimate (\ref{energy}) along with estimate (\ref{psiest}) implies that there exists \( C_{\ast}>0\) such that
\(
\sum_{i,j=1}^{2} \| \psi^{i}_{j}(t)\|_{2}\leq C_{\ast}
\)
holds for all \( t>0\). Here, \( C_{\ast}=C_{\ast}(K) \) depends on 
\( K=\| \mbox{\mathversion{bold}$v$}_{1}(0)\|_{3}+\|\mbox{\mathversion{bold}$v$}_{2}(0)\|_{3}\) and is
non-decreasing with respect to \( K>0 \). 
We briefly remark that
estimate (\ref{energy}) was utilized in
\cite{23} and is derived
from the conserved quantities (\ref{conserve1}), (\ref{conserve2}), and
\begin{align*}
\frac{{\rm d}}{{\rm d}t}\|\mbox{\mathversion{bold}$v$}_{s}(t)\|
=0,
\end{align*}
along with the fact that \( \lvert \mbox{\mathversion{bold}$v$}(s,t)\rvert =1 \) for all \( s\in I_{L}\) and \( t>0\).
Setting \( \psi_{1}(s,t):= \psi^{1}_{1}(s,t)+{\rm i}\psi^{1}_{2}(s,t) \) 
and \( \psi_{2}(s,t):=\psi^{2}_{1}(s,t) + {\rm i}\psi^{2}_{2}(s,t) \), we 
see that
\begin{align*}
\| \psi_{1}(t)-\psi_{2}(t)\|_{2}
\leq
C_{2}M
\end{align*}
holds for all \( t>0\), where \( C_{2}>0\) depends on \( C_{0}\) and 
\( K\) and is non-decreasing with respect to \( C_{0}>0 \) and \( K>0\).
Finally, setting
\begin{align*}
q_{1}(s,t)&:=\psi_{1}(s,t)
\exp \left\{ \frac{{\rm i}}{2} \int^{t}_{0}\lvert\psi_{1}(0,\tau )\rvert^{2}
 \ d\tau \right\}, \\[3mm]
q_{2}(s,t)&:= \psi_{2}(s,t)\exp \left\{ \frac{{\rm i}}{2} \int^{t}_{0}
\lvert \psi_{2}(0,\tau )\rvert ^{2} \ d\tau \right\},
\end{align*}
we have
\begin{align*}
\| \psi_{1}(t)-\psi_{2}(t)\|_{2}
&=
\left\| \exp \left\{ \frac{{\rm i}}{2} \int^{t}_{0}
\big( \lvert \psi_{2}(0,\tau )\rvert ^{2}
-\lvert \psi_{1}(0,\tau )\rvert^{2}\big)
 \ d\tau \right\}
 q_{1}(t)-q_{2}(t) \right\|_{2} \\[3mm]
& \geq
\inf_{\theta\in \mathbf{R}}\|\exp\{ {\rm i}\theta \}q_{1}(t)
-q_{2}(t)\|_{2}
\end{align*}
for all \( t>0\). 
In summary, we have
\begin{pr}
Let \( \mbox{\mathversion{bold}$v$}_{1},\mbox{\mathversion{bold}$v$}_{2}\in C\big([0,\infty);H^{3}(I_{L})\big)\cap C^{1}\big([0,\infty);H^{1}(I_{L})\big) \)
be solutions of problem {\rm (\ref{slant})} with 
initial datum \( \mbox{\mathversion{bold}$v$}_{0,1},\mbox{\mathversion{bold}$v$}_{0,2}\in H^{3}(I_{L}) \), respectively,
and a common boundary datum
\( \mbox{\mathversion{bold}$b$}\in \mathbf{S}^{2}\).
Let \( q_{1},q_{2}\in C\big([0,\infty);H^{2}(I_{L})\big)\cap C^{1}\big( [0,\infty);L^{2}(I_{L})\big) \) be solutions of
problem {\rm (\ref{plane})} corresponding to 
\( \mbox{\mathversion{bold}$v$}_{1}\) and \( \mbox{\mathversion{bold}$v$}_{2}\)
respectively, through the generalized Hasimoto transformation.
Furthermore, assume that there exists 
\( C_{0}>0\) independent of 
\( \mbox{\mathversion{bold}$v$}_{1}\) and \( \mbox{\mathversion{bold}$v$}_{2}\)
and \( M\geq 0\) such that
\begin{align*}
\| \mbox{\mathversion{bold}$v$}_{1}(t)
-\mbox{\mathversion{bold}$v$}_{2}(t)\|_{3}
\leq
C_{0}M
\end{align*}
holds for all \( t>0\).
Then, there exists \( C_{\ast \ast}>0\) depending on \( C_{0}\) and
\( \| \mbox{\mathversion{bold}$v$}_{0,1}\|_{3}+ \| \mbox{\mathversion{bold}$v$}_{0,2}\|_{3}\)
such that
\begin{align*}
\inf_{\theta\in \mathbf{R}}\| \exp\{ {\rm i}\theta \}q_{1}(t)
-q_{2}(t)\|_{2}
\leq
C_{\ast \ast}M
\end{align*}
holds for all \( t>0\). Here, \( C_{\ast \ast}>0 \)
is non-decreasing with respect to 
\( C_{0}>0 \) and \(  \| \mbox{\mathversion{bold}$v$}_{0,1}\|_{3}+ \| \mbox{\mathversion{bold}$v$}_{0,2}\|_{3}\).
\label{lyaporb}
\end{pr}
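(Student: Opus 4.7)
The plan is to run the construction from the proof of Theorem \ref{hasi1} in parallel for both $\mathbf{v}_{1}$ and $\mathbf{v}_{2}$, carefully tracking how each auxiliary quantity depends on $M$. For each $i=1,2$ let $\mathbf{e}^{i}$ solve (\ref{e1})/(\ref{e2}) with the common initial value $-\mathbf{e}_{2}$ at $s=0$, set $\mathbf{w}^{i}:=\mathbf{v}_{i}\times\mathbf{e}^{i}$, define decomposition coefficients $\psi^{i}_{j}$ by $\mathbf{v}_{is}=\psi^{i}_{1}\mathbf{e}^{i}+\psi^{i}_{2}\mathbf{w}^{i}$, and produce $q_{i}$ from $\psi_{i}:=\psi^{i}_{1}+\mathrm{i}\psi^{i}_{2}$ via the gauge (\ref{gauge}). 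Because the infimum over $\theta$ on the left-hand side of the conclusion precisely absorbs the discrepancy between the two gauge phases, it will suffice to estimate $\|\psi_{1}(t)-\psi_{2}(t)\|_{2}$ uniformly in time.

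First I would bound $\|\mathbf{e}^{1}(t)-\mathbf{e}^{2}(t)\|_{3}$ and $\|\mathbf{w}^{1}(t)-\mathbf{w}^{2}(t)\|_{3}$ by $CM$ with $C$ depending on $C_{0}$ and the $H^{3}$-norms of $\mathbf{v}_{1},\mathbf{v}_{2}$. Subtracting (\ref{e1}) and (\ref{e2}) produces a linear ODE in $s$ for $\mathbf{e}^{1}-\mathbf{e}^{2}$ sourced by $\mathbf{v}_{1}-\mathbf{v}_{2}$, to which a Gronwall argument on $[0,L]$ of the same form as in Section \ref{inverse hasimoto} applies; successive differentiation in $s$ promotes the estimate to $H^{3}$. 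Parallel to this, the a priori bound (\ref{energy}) from Theorem \ref{existv} combined with estimate (\ref{psiest}) gives a uniform-in-time bound $\sum_{i,j}\|\psi^{i}_{j}(t)\|_{2}\leq C_{\ast}(K)$ with $K:=\|\mathbf{v}_{0,1}\|_{3}+\|\mathbf{v}_{0,2}\|_{3}$, so that any constant a priori depending on $\|\psi^{i}_{j}(t)\|_{2}$ will depend only on $K$.

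Second, subtracting the frame ODEs, writing the left-hand side using the decompositions of $\mathbf{v}_{is}$, and taking the inner product with $\mathbf{v}_{1}$ yields the algebraic identities
\begin{align*}
\psi^{1}_{1}-\psi^{2}_{1} &= -(\mathbf{e}^{1}-\mathbf{e}^{2})_{s}\cdot\mathbf{v}_{1}+\psi^{2}_{1}\bigl((\mathbf{v}_{2}-\mathbf{v}_{1})\cdot\mathbf{v}_{1}\bigr),
\end{align*}
and the analogue for the second component. Each right-hand factor is now controlled in $H^{2}$: the frame difference by Step~1, the shape difference by hypothesis, and the $\psi^{2}_{j}$ factor by $C_{\ast}(K)$. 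Using that $H^{2}(I_{L})$ is a Banach algebra, this gives $\|\psi^{1}_{j}(t)-\psi^{2}_{j}(t)\|_{2}\leq C_{1}M$ and hence $\|\psi_{1}(t)-\psi_{2}(t)\|_{2}\leq C_{2}M$, with the required dependence of constants.

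The final step transfers the bound to $q_{i}$ through the gauge. Writing $q_{i}(s,t)=\mathrm{e}^{\mathrm{i}\theta_{i}(t)}\psi_{i}(s,t)$ with $\theta_{i}(t):=\tfrac{1}{2}\int_{0}^{t}|\psi_{i}(0,\tau)|^{2}\,\mathrm{d}\tau$, one has $\|\psi_{1}(t)-\psi_{2}(t)\|_{2}=\|\mathrm{e}^{\mathrm{i}(\theta_{2}(t)-\theta_{1}(t))}q_{1}(t)-q_{2}(t)\|_{2}\geq \inf_{\theta\in\mathbf{R}}\|\mathrm{e}^{\mathrm{i}\theta}q_{1}(t)-q_{2}(t)\|_{2}$, which combined with Step~2 yields the claim with $C_{\ast\ast}=C_{2}$. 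The main obstacle is exactly this last subtraction: the phases $\theta_{i}(t)$ grow linearly in $t$ and are generally not close to one another even when $M$ is tiny, so $\|q_{1}(t)-q_{2}(t)\|_{2}$ itself is not controlled by $M$ uniformly in $t$; it is only after factoring out the gauge via the infimum over $\theta$ that a time-global bound emerges, which is precisely why the conclusion of Theorem \ref{stability} is phrased in orbital form.
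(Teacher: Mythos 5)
Your proposal is correct and follows essentially the same route as the paper: parallel frames sharing a common value at \( s=0 \), Gronwall in \( s \) for the frame differences promoted to \( H^{3} \), the algebraic identities (\ref{hasiest3})--(\ref{hasiest4}) combined with the uniform-in-time bound from (\ref{energy}) and (\ref{psiest}), and finally absorbing the gauge-phase discrepancy into the infimum over \( \theta \), exactly as in the paper's final inequality. The only slip is immaterial: you take the frame initial value to be \( -\mbox{\mathversion{bold}$e$}_{2} \) whereas (\ref{e1}) and (\ref{e2}) use \( \mbox{\mathversion{bold}$e$}_{2} \), but all that matters is that the two frames share the same value at \( s=0 \).
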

Proposition \ref{lyaporb} with \( M\geq 0\) being small implies that 
Lyapunov stability estimates for problem (\ref{slant})
translates to orbital stability estimates for 
problem (\ref{plane}) through the 
generalized Hasimoto transformation.


\subsubsection{Final Step of the Proof of Theorem \ref{stability}}

We combine all of the results of the preceding sections
to finish the proof of Theorem \ref{stability}.

Let \( q_{R}(t)=-\frac{1}{R}\exp\{-\frac{{\rm i}t}{2R^{2}}\} \)
with \( R> L/\pi \) be the plane wave solution of problem (\ref{plane})
and \( \phi_{0}\in H^{2}(I_{L})\) satisfy the \( 0\)-th order 
compatibility condition for (\ref{plane}).
Let \( \mbox{\mathversion{bold}$v$}^{R} \) be the arc-shaped solution 
of problem (\ref{slant}) which corresponds to \( q_{R}\) through
the inverse generalized Hasimoto transformation as shown in 
Section \ref{hasiplane}. Recall that 
\( \mbox{\mathversion{bold}$v$}^{R}=\mbox{\mathversion{bold}$v$}^{R}(s)\)
is a stationary solution. Furthermore, 
let \( \mbox{\mathversion{bold}$v$}_{0}\in H^{3}(I_{L})\) 
be the initial datum corresponding to
\( q_{R}(0)+ \phi_{0} \) and set 
\( \mbox{\mathversion{bold}$\varphi$}_{0}:= \mbox{\mathversion{bold}$v$}_{0}-\mbox{\mathversion{bold}$v$}^{R} \). 
Finally, let \( \mbox{\mathversion{bold}$v$}\in C\big( [0,\infty);H^{3}(I_{L})\big)\cap C^{1}\big(  [0,\infty);H^{1}(I_{L})\big) \) be the solution 
of problem (\ref{slant}) with initial datum
\( \mbox{\mathversion{bold}$v$}_{0} \) and boundary datum
\( \mbox{\mathversion{bold}$b$}=\mbox{\mathversion{bold}$v$}^{R}(L)\)
and set
\( \mbox{\mathversion{bold}$\varphi$}:= \mbox{\mathversion{bold}$v$}-\mbox{\mathversion{bold}$v$}^{R}\).
From the results of Section \ref{inverse hasimoto},
we have
\begin{align}
\| \mbox{\mathversion{bold}$\varphi$}_{0}\|_{3}=
\| \mbox{\mathversion{bold}$v$}_{0} 
-\mbox{\mathversion{bold}$v$}^{R}\|_{3}
\leq
C\big(
\|\phi_{0}\|_{2} + \| \phi_{0}\|_{2}^{3}
\big),
\label{estest}
\end{align}
where \( C>0\) depends on \( \|q_{R}(0)\|_{2}+\| \phi_{0}\|_{2} \)
and is non-decreasing with respect to 
\( \|q_{R}(0)\|_{2}+\| \phi_{0}\|_{2} \). Note that \( q_{R} \)
doesn't depend on \( s\), so that \( C\) essentially depends on 
\( R\) and \( \| \phi_{0}\|_{2}\), and is non-decreasing with respect to
\( \| \phi_{0}\|_{2}\).
From the above estimate and Proposition \ref{arcstable}, there exists 
\( C_{3}>0\), which depends on \( L\), \( R\), and 
\( \| \phi_{0}\|_{2} \), such that
\begin{align*}
\| \mbox{\mathversion{bold}$v$}(t)-\mbox{\mathversion{bold}$v$}^{R}\|_{3}
\leq
C_{3}\big(
\|\phi_{0}\|_{2} + \| \phi_{0}\|^{9}_{2}
\big)
\end{align*}
holds for all \( t>0\). Proposition \ref{lyaporb} further 
shows that there exists \(C_{4} >0\) depending on 
\( L\), \(R\), \( \| \phi_{0}\|_{2} \), and
\( \| \mbox{\mathversion{bold}$v$}^{R}\|_{3}+\| \mbox{\mathversion{bold}$v$}_{0}\|_{3}\) such that
\begin{align*}
\inf_{\theta \in \mathbf{R}}\| \exp\{{\rm i}\theta\}q(t)-
q_{R}(t)\|_{2}
\leq
C_{4}\big(
\|\phi_{0}\|_{2} + \| \phi_{0}\|^{9}_{2}
\big)
\end{align*}
holds for all \( t>0\), and
\( C_{4}>0\) is non-decreasing with respect to 
\( \| \phi_{0}\|_{2}\) and \( \| \mbox{\mathversion{bold}$v$}^{R}\|_{3}+\| \mbox{\mathversion{bold}$v$}_{0}\|_{3}\). Again, note that
\( \| \mbox{\mathversion{bold}$v$}^{R}\|_{3}\) is essentially a constant 
depending on \( R\). From inequality (\ref{estest})
we see that
\begin{align*}
\| \mbox{\mathversion{bold}$v$}_{0}\|_{3}
\leq C(1+\| \phi_{0}\|_{2}+\| \phi_{0}\|_{2}^{3}),
\end{align*}
where \( C>0\) depends on \( R \) and \( \|\phi_{0}\|_{2}\), and is 
non-decreasing with respect to \( \|\phi_{0}\|_{2}\).
Hence, \( C_{4}>0\) can be chosen uniformly with respect to \( \phi_{0}\)
satisfying \( \| \phi_{0}\|_{2} \leq 1\).

Finally, for an arbitrary \( \varepsilon >0\), 
choose \( 0<\delta<1 \) small enough such that
\( C_{4}\big( \delta + \delta^{9}\big)<\varepsilon \) holds.
Then, if \( \|\phi_{0}\|_{2}\leq \delta \), 
\begin{align*}
\inf_{\theta \in \mathbf{R}}\| \exp\{{\rm i}\theta\}q(t)-
q_{R}(t)\|_{2}
<
\varepsilon
\end{align*}
holds for all \( t>0\), and this finishes the proof of Theorem \ref{stability}.

\hfill \( \Box \)

\section*{Acknowledgement}

This work was supported in part by JSPS Grant-in-Aid for Early-Career Scientists
grant number 20K14348.



\end{document}